\newtheorem{theorem}{Theorem}[section]
\newtheorem{proposition}[theorem]{Proposition}
\newtheorem{lemma}[theorem]{Lemma}
\newtheorem{remark}[theorem]{Remark}
\newcommand{\mL}{{\cal L}}
\newcommand{\R}{\mathbb{R}}
\newcommand{\beq}{\begin{equation}}
\newcommand{\eeq}{\end{equation}}
\renewcommand{\k}{{\cal C}}
\newcommand{\C}{\mathbb{C}}
\newcommand{\N}{\mathbb{N}}
\newcommand{\Z}{{\mathbb{Z}}}
\newcommand{\mM}{{\cal M}}
\title {\bf Classical solutions to quasilinear parabolic problems with dynamic boundary conditions
\thanks{This work was partially supported by the 
MIUR-PRIN Grant 20089PWTPS
{\it Analisi Matematica nei Problemi Inversi per le Applicazioni}. The author is also member of the GNAMPA of Istituto Nazionale di Alta Matematica. } }
\author{Davide Guidetti\\
Dipartimento di Matematica, Universit\`{a} di Bologna\\
Piazza di Porta San Donato 5, 40126 Bologna, Italy
\\
davide.guidetti@unibo.it}
\date{}
\begin{document}

\maketitle

\abstract{We study linear nonautonomous parabolic systems with dynamic boundary conditions. Next, we apply these results to show a theorem of local existence and uniqueness of a classical solution to a second order quasilinear system with nonlinear dynamic boundary conditions. }

\medskip

{\bf AMS Subject Classification:}35K55, 35K15. 

\medskip

{\bf Keywords:}Nonlinear mixed parabolic systems; Nonlinear boundary conditions; Dynamic boundary conditions; H\"older continuous functions.

\section{Introduction and preliminaries}\label{se1}

The main aim of this paper is to study existence and uniqueness of local classical solutions to quasilinear parabolic systems with dynamic boundary conditions in the form
\begin{equation}\label{eq5.1}
\left\{\begin{array}{ll}
D_t u(t,x) = \sum_{|\alpha| = 2}Êa_\alpha(t,x, u(t,x), \nabla_x u(t,x)) D_x^\alpha u(t,x) + f(t,x, u(t,x), \nabla_x u(t,x)), & t \geq 0, x \in \Omega, \\ \\
D_t u(t,x') + \sum_{j=1}^nÊb_j(t,x', u(t,x')) D_{x_j} u(t,x') = h(t,x, u(t,x)), & t \geq 0,  x' \in \partial \Omega, \\ \\
u(0,x) = u_0(x), & x \in \Omega. 
\end{array}
\right.
\end{equation}
With the term "classical solutions", we mean solutions possessing all the derivatives appearing in (\ref{eq5.1}) in pointwise sense: so we look for conditions guaranteeing the existence of solutions $u(t,x)$ with 
$D_tu$, $D_x^\alpha u$ ($|\alpha| \leq 2$) continuous in $[0, \tau] \times \overline \Omega$, for some $\tau > 0$. If the functions $b_j$ and $h$ are suitably regular, these conditions, together with the second equation in (\ref{eq5.1}) (the boundary condition) imply that the map $t \to D_t u(t,\cdot)$ should be continuous with values in $C^1(\partial \Omega)$. However, it is well known that, in order to get neat results for parabolic problems, it is often advisable to replace continuous  with  H\"older continuous functions. So a natural class of solutions could be the set of functions in $C^{1+\beta/2, 2+\beta}((0, \tau) \times \Omega)$ (see (\ref{eq1.6})), such that the restriction of $D_tu$ to $[0, \tau] \times \partial \Omega$ is bounded (as a function of $t$) with values in $C^{1+\beta}(\partial \Omega)$. 

In order to frame our results, I begin by recalling some previous literature, concerning nonlinear parabolic problems with dynamic boundary conditions. The first paper I quote is \cite{Ka1}: here the author considers the system 
\begin{equation}\label{eq1.1}
\left\{\begin{array}{lll}
D_t u - \sum_{j=1}^n D_{x_j}[a_{j}(u,\cdot, \nabla_x u) D_{x_j} u]Ê+ a_0(u,\cdot) = f(t,\cdot)  & {\rm in  }  & \R^+ \times \Omega,Ê\\ \\
D_t u + \sum_{j,k=1}^n a_{jk}(u, \cdot) \nu_j D_{x_k} u + b_0(u,\cdot)u = g_1(u) & {\rm on} & \R^+ \times \Gamma_1, \\ \\
\sum_{j,k=1}^n a_{jk}(u, \cdot) \nu_j D_{x_k} u + b_1(u,\cdot) = 0 & {\rm on} & \R^+ \times \Gamma_2, \\ \\
u(0,\cdot) = u_0 & {\rm in}  & \Omega,
\end{array}
\right. 
\end{equation}
where I have indicated with $\nu(x')$ the unit normal vector to $\partial \Omega$ in $x' \in \partial \Omega$, pointing outside $\Omega$. Employing monotonicity assumptions and Rothe's method,  imposing a polynomial growth of the coefficients, he constructs a generalized solution. The initial datum $u_0$ is taken in a suitable space $W^{1,p}(\Omega)$, with $p$ connected with the growth conditions of the functions $a_j$.  Some results of regularity are proved (for example, the solution in continuos with values in $H^2(\Omega')$ for every $\Omega'$ with compact closure in $\Omega$). 

In the paper \cite{Es1}, the author considers the system 
\begin{equation}\label{eq1.1}
\left\{\begin{array}{lll}
D_t u - \sum_{j,k=1}^n D_{x_j}[a_{jk}(u,\cdot) D_{x_k} u]Ê+ \sum_{j=1}^n a_j(u,\cdot) D_{x_j} u + a_0(u,\cdot)u = f(u)  & {\rm in  }  & \R^+ \times \Omega,Ê\\ \\
\epsilon D_t u + \sum_{j,k=1}^n a_{jk}(u, \cdot) \nu_j D_{x_k} u + b_0(u,\cdot) = g_1(u) & {\rm on} & \R^+ \times \Gamma_1, \\ \\
\sum_{j,k=1}^n a_{jk}(u, \cdot) \nu_j D_{x_k} u + b_0(u,\cdot)u = g_1(u) & {\rm on} & \R^+ \times \Gamma_2, \\ \\
u(0,\cdot) = u_0 & {\rm in}  & \Omega. 
\end{array}
\right. 
\end{equation}
Here $u: [0, \infty) \times \Omega \to \R^N$ ($a_{jk}(u,\cdot)$,  $a_0(u,\cdot)$, $b_0(u,\cdot)$ are, in fact, matrixes),  $\sum_{j,k=1}^n D_{x_j}[a_{jk}(\xi,\cdot) D_{x_k}]$ are normally elliptic and all the functions appearing in the first equation in (\ref{eq1.1}) are smooth ($C^\infty$). If $p > n$, 
$n/p < r < 1 < \tau < 1+ 1/p$, $\delta = \frac{1-r}{2}$, $\sigma \in \R^+$, $u_0 \in W^{\tau,p}(\Omega)$, there is a unique maximal weak solution $u$ in $C(J; W^{1,p}(\Omega)) \cap C^\delta(J; W^{r,p}(\Omega)) \cap C(J \setminus \{0\}; W^{\sigma,p}(\Omega))$, with $J = [0, T]$, for some $T > 0$. No growth conditions are imposed to the coefficients.  Related results are sketched in \cite{Hi1}. 

Replacing in the boundary condition the time derivative with the second term of the parabolic equation, one is formally reduced to a stationary boundary condition of second order, which is usually called "generalized Wentsell boundary condition". So, in \cite{FaGoGoRo2}Ê the authors consider  the operator $\mathcal A u = \phi(x,u'(x)) u''(x) + \psi(x,u(x),u'(x))$, with suitable assumptions on $\phi$ and $\psi$, in the interval $[0, 1]$. General boundary conditions in the form $B(u)(j):= \alpha_j (Au)(j) + \beta_j u'(j) \in \gamma_j(u(j))$ $(j \in \{0, 1\})$, with $\gamma_j$ maximal monotone are imposed. Then it is proved that $\mathcal A$, equipped  with such conditions, is 
$m-$dissipative and generates a nonlinear contraction semigroup in $C([0, 1])$. 

In \cite{Wa2}, the author considers a domain $\Omega$  with Lipschitz boundary, and the Laplacian with the Wentzell-Robin boundary condition
$$
\Delta u + \frac{\partial u}{\partial \nu} + \beta u = 0.
$$
Here $\beta \in L^\infty(\partial \Omega)$, $\beta \geq 0$. Then he proves that this operator generates an analytic semigroup in $C(\overline \Omega)$.

The same author   considers in \cite{Wa1} the $p-$Laplacian $A_pu = div(a(x) |\nabla u|^{p-2}) \nabla u)$, with $a(x) > 0$ in $\Omega$ and the nonlinear Wentzell boundary condition on $\partial \Omega$ 
$$0 \in A_pu + b |\nabla u|^{p-2} \frac{\partial u}{\partial \nu}Ê+ \beta(\cdot,u), $$
Here $\beta(x,\cdot)$ is the sub differential of the functional $B(x,\cdot)$. He proves that it generates a nonlinear sub-Markovian $C_0-$semigroup on suitable $L^2-$spaces and, in case $b(x) \geq b_0 > 0$,  he obtains the existence of nonlinear, non expansive semigroups in $L^q$ spaces, for every $q \in [1, \infty)$. A related situation with dynamic boundary conditions is treated in \cite{GaWa1}. 
 
Finally, the asymptotic behavior of semilinear parabolic system with dynamic boundary conditions is studied in \cite{CoGoGo1}. The nonlinearity is only in the boundary condition. A generalization (with a first order nonlinear term in the parabolic equation) is given in \cite{CoGoGo2}. 

In this paper, differently from these papers, we want to show the existence and uniqueness of local solutions to (\ref{eq5.1}) which are regular up to $t = 0$. Observe that we consider the case that the coefficients of the elliptic operator in the parabolic equation depend also on $\nabla u$. Moreover, we consider systems which are not necessarily in divergence form and no particular connection between the elliptic operator and the first order operator $\sum_{j=1}^nÊb_j(t,x', u) D_{x_j}$ is required: the only structural condition that we impose is that $\sum_{j=1}^nÊb_j(t,x', u) \nu_j(x') > 0$ for every $x' \in \partial \Omega$. Finally, we do not impose any kind of growth condition on the coefficients. 

The organization of this paper is the following: in this first section, we introduce some notations,  together with some known facts,  which we are going to employ in the sequel. The second section contains a careful study of linear
non autonomous parabolic systems, which may have some interest in itself, in particular Theorem \ref{th4.5}. This study is preliminary to the final third section, containing the main result, Theorem \ref{th3.5}. Such theorem states the existence and uniqueness of a local solution $u$ to (\ref{eq5.1}) such that
$u \in C^{1+\beta/2, 2+\beta}((0, \tau) \times \Omega)$, with $D_tu_{|(0, \tau) \times \partial \Omega}$ bounded with values in $C^{1+\beta}(\partial \Omega)$ ($\beta \in (0, 1)$). Apart some regularity of the coefficients, we require only the strong ellipticity
of the operator $\sum_{|\alpha| = 2} a_\alpha(t,x,u,p) D_x^\alpha$ and the condition $\sum_{j=1}^n b_j(t,x', u) \nu_j(x') > 0$ if $x' \in \partial \Omega$. Concerning the initial datum $u_0$, it should belong to $C^{2+\beta}(\Omega)$ and a certain (necessary) compatibility condition ((\ref{eq3.2})) should hold. 

The main tool of the proof is a certain maximal regularity result which was
proved in \cite{Gu4}, and it is stated in Theorem \ref{th2.1}. 

We pass to the aforementioned notations and known facts.

$C(\alpha, \beta, \dots)$ will indicate a positive real number depending on $\alpha, \beta, \dots$ and may be different from time to time. The symbol $\nabla_{x,u}b$ will indicate the gradient of $b$ with respect to the vector $(x,u)$ ($x \in \R^n, u \in \R)$. On the other hand, $D^2_{x_j u}b$ stands for $\frac{\partial^2 b}{\partial x_j \partial u}$. 

Let $\Omega$ be an open subset of $\R^n$. We shall indicate with 
$\mathcal C(\Omega)$ the class of complex valued continuous functions and with $C(\Omega)$ the subspace of uniformly continuous and bounded functions. 
If $f \in C(\Omega)$, it is continuously extensible to its topological closure $\overline \Omega$. We shall identify $f$ with this extension. If $m \in \N$, we indicate with $\mathcal C^m(\Omega)$($C^m(\Omega)$) the class of functions $f$ in $\mathcal C(\Omega)$($C(\Omega)$), whose derivatives $D^\alpha f$, with order $|\alpha| \leq m$, belong to $\mathcal C(\Omega)$($C(\Omega)$). $C^m(\Omega)$ admits the natural norm
\begin{equation}
\|f\|_{C^m(\Omega)}Ê:= \max\{\|D^\alpha f\|_{C(\Omega)}Ê: |\alpha| \leq m\},
\end{equation}
with $\|f\|_{C(\Omega)} := \sup_{x \in \Omega}Ê|f(x)|$. If 
$$[f]_{C^\beta(\Omega)}: = \sup_{x,y \in \Omega, x \neq y}Ê|x-y|^{-\beta} |f(x) - f(y)|\}$$ 
and $m \in \N_0$, we set 
\begin{equation}
\|f\|_{C^{m+\beta}(\Omega)}:= \max\{\| f\|_{C^m(\Omega)}, \max\{ [D^\alpha f]_{C^\beta(\Omega)}Ê: |\alpha| = m\}\},
\end{equation}
and, of course, $C^{m+\beta}(\Omega) = \{f \in C^m(\Omega): \|f\|_{C^{m+\beta}(\Omega)} < \infty\}$. If $\partial \Omega$ is sufficiently regular, $0 \leq \beta_0 < \beta < \beta_1$,
$$C^{\beta}(\Omega) \in J_{\frac{\beta - \beta_0}{\beta_1 - \beta_0}}(C^{\beta_0}(\Omega), C^{\beta_1}(\Omega)),$$
which means that there exists $C > 0$, such that, $\forall f \in C^{\beta_1}(\Omega)$
\begin{equation}\label{eq1.5}
\|f\|_{C^{\beta}(\Omega)}  \leq C \|f\|_{C^{\beta_0}(\Omega)}^{1-\theta}  \|f\|_{C^{\beta_1}(\Omega)}^{\theta}. 
\end{equation}

We pass to consider vector valued functions. Let $X$ be a Banach space. If $A$ is a set, we shall indicate with
$B(A; X)$ the Banach space of bounded functions from $A$ to $X$. If $m \in \N_0$, $\beta \geq 0$ and $\Omega$ is an open subset of $\R^n$, the definitions of $\mathcal C^m(\Omega; X)$, $C^m(\Omega; X)$, $C^\beta(\Omega; X)$ and of the norms $\|\cdot\|_{C^\beta(\Omega; X)}$ can be  obtained by obvious modifications of the corresponding, in the case $X = \C$. (\ref{eq1.5}) can be generalized to vector valued functions. 

 If $I$ is an open interval in $\R$, $\Omega$ is an open subset of $\R^n$ and $\alpha, \beta$ are nonnegative, we set
\begin{equation}\label{eq1.6}
C^{\alpha,\beta}(I \times \Omega):= C^\alpha(I; C(\Omega)) \cap C^\beta(\Omega; C(I)),
\end{equation}
equipped with its natural norm
$$
\|f\|_{C^{\alpha,\beta}(I \times \Omega)}:= \max\{\|f\|_{C^\alpha(I; C(\Omega))}, \|f\|_{B(I; C^\beta(\Omega))}\}. 
$$
The following inclusions hold (see \cite{Gu4}): 

\begin{lemma}\label{le2.7B} (I) $C^\beta(\Omega; C(I)) \subseteq  B(I; C^\beta(\Omega))$. 

(II) Suppose $\alpha, \beta \geq 0$ with $\beta \not \in \Z$ and $\Omega$ such that there exists a common linear bounded extension operator, mapping $C(\Omega)$ into $C(\R^n)$ and
$C^\beta(\Omega)$ into $C^\beta(\R^n)$. Then, 
$$C^{\alpha,\beta}(I \times \Omega) = C^{\alpha}(I; C(\Omega)) \cap B(I; C^\beta(\Omega)). $$
Let $\beta \in (0,1)$ and suppose that   there exists a common linear bounded extension operator mapping $C^\gamma(\Omega)$ into $C^\gamma(\R^n)$, $\forall \gamma \in [0, 2+\beta]$. 
Then

(III) $C^{1+\beta/2, 2+\beta}(I \times \Omega) = C^{1+\beta/2}(I; C(\Omega)) \cap B(I; C^{2+\beta}(\Omega))$; 

(IV) if $f \in C^{1+\beta/2, 2+\beta}(I \times \Omega)$, $D_t f \in B(I; C^\beta(\Omega))$; 

(V) $C^{1+\beta/2, 2+\beta}(I \times \Omega) \subseteq C^{\frac{1+\beta}{2}}(I; C^1(\Omega)) \cap C^{\beta/2}(I; C^2(\Omega))$.

\end{lemma}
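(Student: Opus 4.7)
The plan is to address (I)--(V) in sequence, observing that (I)--(III) are relatively elementary bookkeeping from the definitions and the extension operator, while (IV)--(V) are the genuinely parabolic ``trading'' results.

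For (I), the definition of $C^\beta(\Omega;C(I))$ gives immediately $\|f(\cdot,x) - f(\cdot,y)\|_{C(I)} \leq [f]_{C^\beta(\Omega;C(I))}|x-y|^\beta$; taking the supremum over $t \in I$ yields $[f(t,\cdot)]_{C^\beta(\Omega)} \leq [f]_{C^\beta(\Omega;C(I))}$ uniformly in $t$, so with the obvious sup-bound $f \in B(I;C^\beta(\Omega))$. For (II), the inclusion $\subseteq$ follows from (I); for $\supseteq$, given $f \in C^\alpha(I;C(\Omega)) \cap B(I;C^\beta(\Omega))$, the case $\beta \in [0,1)$ is immediate since $|f(t,x)-f(t,y)| \leq \|f(t,\cdot)\|_{C^\beta(\Omega)} |x-y|^\beta$ uniformly in $t$. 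For $\beta = m+\gamma$ with $m \in \N$, $\gamma \in (0,1)$, I would apply the extension $E f(t,\cdot)$ to $\R^n$ and verify via (\ref{eq1.5}) applied to $f(t,\cdot)-f(s,\cdot)$ (interpolating the $C(\Omega)$-estimate $C|t-s|^\alpha$ against the uniform $C^\beta(\Omega)$-bound) that $f$ is continuous with values in $C^{\theta\beta}(\Omega)$ for any non-integer $\theta\beta \in (0,\beta)$; in particular, the partial derivatives $D_x^{\alpha'} f(\cdot,t)$ of order $|\alpha'| \leq m$ are jointly continuous in $(x,t)$, giving $f \in C^\beta(\Omega;C(I))$. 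Part (III) is the specialization of (II) with $\alpha=1+\beta/2$ and spatial exponent $2+\beta$, using the extension operator on $[0,2+\beta]$.

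For (IV) and (V), the plan is to prove them jointly by exploiting the parabolic scaling $h \sim r^2$. The central identity, for $x,y \in \Omega$ with $r=|x-y|$ and $h>0$, is the mixed difference
$$\Delta := u(t+h,x) - u(t,x) - u(t+h,y) + u(t,y),$$
which admits two expansions: by first-order Taylor in time (using $u \in C^{1+\beta/2}(I;C(\Omega))$),
$$\Delta = h[D_t u(t,x) - D_t u(t,y)] + O(h^{1+\beta/2}),$$
and by second-order Taylor in space around $y$ (using $u \in B(I;C^{2+\beta}(\Omega))$ from (III)),
$$\Delta = (x-y) \cdot [\nabla u(t+h,y) - \nabla u(t,y)] + \tfrac{1}{2}[D^2 u(t+h,y) - D^2 u(t,y)](x-y,x-y) + O(r^{2+\beta}).$$
Equating and choosing $h=r^2$ shows that (IV) follows once (V) provides the bounds $\|\nabla u(t+h) - \nabla u(t)\|_C \leq Ch^{(1+\beta)/2}$ and $\|D^2 u(t+h) - D^2 u(t)\|_C \leq Ch^{\beta/2}$: every term on the right-hand side then has magnitude $O(r^{2+\beta})$, whence $|D_t u(t,x) - D_t u(t,y)| \leq Cr^\beta$. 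The bounds for (V) are produced by running the same strategy on the antisymmetric difference $u(t,x+\eta e_j) - u(t,x-\eta e_j) = 2\eta D_{x_j} u(t,x) + O(\eta^{2+\beta})$ and the symmetric difference $u(t,x+\eta e_j) + u(t,x-\eta e_j) - 2u(t,x) = \eta^2 D^2_{jj} u(t,x) + O(\eta^{2+\beta})$, taking time differences, and balancing $\eta \sim h^{1/2}$.

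To close the joint estimate without circular reasoning, I would introduce the time-averaged approximation $D_t u_\delta(t,x) := \delta^{-1}[u(t+\delta,x) - u(t,x)]$, which lies in $C^{2+\beta}(\Omega)$ with $\|D_t u_\delta(t,\cdot)\|_{C^{2+\beta}} \leq C\delta^{-1}$ and satisfies $\|D_t u_\delta - D_t u\|_{C(\Omega)} \leq C\delta^{\beta/2}$ by the H\"older-in-$t$ regularity of $D_t u$. Feeding $D_t u_\delta$ in place of $D_t u$ into the parabolic identities, optimizing over $\delta$, and passing to the limit $\delta \to 0^+$ produces the two-sided bounds simultaneously. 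The main obstacle is precisely this joint disentanglement: a naive bootstrap yields constants involving $\kappa := \sup_s [D_t u(s,\cdot)]_{C^\beta}$ that do not close on themselves, and care is needed to verify that $\kappa$ enters the (V) bound with a coefficient strictly less than one so as to be absorbable. Full technical details are in \cite{Gu4}.
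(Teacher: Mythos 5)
First, a point of order: the paper does not prove this lemma at all --- it is quoted verbatim from the preprint \cite{Gu4} (``The following inclusions hold (see \cite{Gu4})''), so there is no in-paper argument to compare yours against. Judged on its own merits, your treatment of (I)--(III) is correct and routine, and your strategy for (IV)--(V) --- mixed second differences, Taylor expansion in $t$ and in $x$, parabolic balancing $h\sim r^2$, difference quotients in place of derivatives to guarantee a priori finiteness of the seminorms --- is the standard and appropriate one.

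The gap is that the one genuinely delicate step is asserted rather than carried out, and as written it fails. You correctly identify that (IV) and (V) feed into each other through $\kappa:=\sup_t[D_tu(t,\cdot)]_{C^\beta(\Omega)}$, and you claim that ``care is needed to verify that $\kappa$ enters the (V) bound with a coefficient strictly less than one.'' But with the scales you actually propose ($\eta=h^{1/2}$ in the space differences, $h=r^2$ in the mixed difference) the coefficient does \emph{not} come back strictly less than one: the second spatial difference of $D_tu(t,\cdot)$ over step $\eta$ is bounded by $2\kappa\eta^\beta$, which yields $[D^2u]_{C^{\beta/2}(I;C(\Omega))}\le 2\kappa+C$ and $[\nabla u]_{C^{(1+\beta)/2}(I;C(\Omega))}\le 2^{\beta-1}\kappa+C$, and inserting these into the mixed-difference identity returns $\kappa\le(1+2^{\beta-1})\kappa+C$, which is vacuous. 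The repair is to decouple the two scales, e.g.\ take $\eta=\lambda h^{1/2}$ with $\lambda$ large: the $\kappa$-coefficients then scale like $\lambda^{\beta-2}$ and $\lambda^{\beta-1}$ and can be made as small as one wishes at the price of enlarging the $\kappa$-free constants, after which the absorption closes (applied first to the difference quotients $\delta^{-1}[u(t+\delta,\cdot)-u(t,\cdot)]$, whose $C^\beta$-seminorms are finite a priori, with bounds uniform in $\delta$). Without this, or some equivalent device, the core of (IV)--(V) is not established; and deferring ``full technical details'' to \cite{Gu4} is circular here, since that is exactly the reference the lemma is being imported from.
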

We shall need also spaces $C^{\alpha,\beta}(I \times V)$, with $V$ suitably regular submanifold of $\R^n$: we shall consider, in particular, the case $V = \partial \Omega$, with $\Omega$ open, bounded subset of $\R^n$. Of course, in this case $C^\beta(V; C(I))$ can be defined by local charts. 

We shall employ the following version of the continuation method: 

\begin{proposition}\label{pr1.20B}
Let $X, Y$ be Banach spaces and $L \in C([0, 1]; \mL(X,Y))$. Assume the following:

(a) there exists $M \in \R^+$, such that, $\forall x \in X$, $\forall \epsilon \in [0, 1]$,
$$
\|x\|_X \leq M \|L(\epsilon) x\|_Y; 
$$
(b) $L(0)$ is onto $Y$.

Then, $\forall \epsilon \in [0, 1]$ $L(\epsilon)$ is a linear and topological isomorphism between $X$ and $Y$. 
\end{proposition}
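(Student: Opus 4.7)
The plan is to use a standard continuation argument based on the connectedness of $[0,1]$. First I would observe that hypothesis (a) does two things at once: it forces each $L(\epsilon)$ to be injective, and it forces its range to be closed. Indeed, if $L(\epsilon)x_n$ is Cauchy in $Y$, then (a) applied to $x_n - x_m$ shows $\{x_n\}$ is Cauchy in $X$; its limit $x$ then satisfies $L(\epsilon)x = \lim L(\epsilon)x_n$ by continuity of $L(\epsilon)$.

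Next I would introduce
\[
S := \{\epsilon \in [0,1] : L(\epsilon) \text{ is surjective onto } Y\},
\]
which is nonempty by (b). The goal is to prove $S = [0,1]$ by showing it is both open and closed in $[0,1]$; then connectedness finishes the job. Combined with the injectivity from (a), any $\epsilon \in S$ gives an isomorphism $L(\epsilon) : X \to Y$, and moreover (a) yields the uniform bound $\|L(\epsilon)^{-1}\|_{\mL(Y,X)} \leq M$.

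For openness of $S$, suppose $\epsilon_0 \in S$. For $\epsilon$ near $\epsilon_0$, I would write
\[
L(\epsilon) = L(\epsilon_0)\bigl[I_X + L(\epsilon_0)^{-1}(L(\epsilon) - L(\epsilon_0))\bigr].
\]
By the continuity of $L$ in the operator norm and the uniform bound $\|L(\epsilon_0)^{-1}\| \leq M$, the bracketed factor is invertible via a Neumann series when $\epsilon$ is close enough to $\epsilon_0$, so $L(\epsilon)$ is a composition of isomorphisms and in particular surjective.

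For closedness, I would take $\epsilon_n \in S$ with $\epsilon_n \to \epsilon \in [0,1]$ and an arbitrary $y \in Y$. Pick $x_n \in X$ with $L(\epsilon_n) x_n = y$; by (a), $\|x_n\|_X \leq M\|y\|_Y$, so $\{x_n\}$ is bounded (but not necessarily convergent). Then
\[
\|L(\epsilon) x_n - y\|_Y = \|(L(\epsilon) - L(\epsilon_n)) x_n\|_Y \leq M\|y\|_Y \, \|L(\epsilon) - L(\epsilon_n)\|_{\mL(X,Y)} \longrightarrow 0,
\]
so $y$ lies in the closure of the range of $L(\epsilon)$. Since the first step showed that range is closed, $y \in L(\epsilon)(X)$, proving $\epsilon \in S$. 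The main subtlety I expect is exactly this closedness step, where one must resist the temptation to extract a convergent subsequence from $\{x_n\}$ (there is no compactness) and instead exploit that closed range plus norm continuity of $L$ are enough to pass to the limit on the image side.
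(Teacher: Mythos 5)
Your argument is correct and complete: boundedness below gives injectivity and closed range, openness of the surjectivity set follows from the Neumann series together with the uniform bound $\|L(\epsilon_0)^{-1}\|\leq M$ supplied by (a), and your closedness step correctly avoids any compactness by passing to the limit on the image side and invoking closed range. The paper itself states this proposition as a known version of the continuation method and gives no proof, so there is nothing to compare against; your proof is the standard one and would serve as a valid justification of the statement as used in the paper.
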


\section{Nonautonomous linear systems}

\setcounter{equation}{0}

In this section we shall consider the following system

\begin{equation}\label{eq4.1}
\left\{\begin{array}{l}
D_t u(t,x) - A(t,x,D_x) u(t,x) = f(t,x), \quad t \in (0, T), x \in \Omega, \\ \\
D_t u(t,x') + B(t,x',D_x) u(t,x') = h(t,x'), \quad t \in (0, T),  x' \in \partial \Omega, \\ \\
u(0, x) = u_0(x), \quad x \in \Omega,
\end{array}
\right. 
\end{equation}
with the following conditions:

\medskip
{\it (AF1) $\Omega$ is an open bounded subset of $\R^n$, lying on one side of its boundary $\partial \Omega$, which is a submanifold of class $C^{2+\beta}$ of $\R^n$, for some $\beta \in (0, 1)$; 

(AF2) $A(t,x,D_x) = \sum_{|\alpha| \leq 2} a_\alpha(t,x) D_x^\alpha$, $\sum_{|\alpha| \leq 2} \|a_\alpha\|_{C^{\beta/2,\beta}((0, T) \times \Omega)}Ê\leq N$; if $|\alpha| = 2$, $a_\alpha$ is real valued and $\sum_{|\alpha| = 2} a_\alpha(t,x) \eta^\alpha \geq \nu |\eta|^2$, for some $N, \nu \in \R^+$, $\forall (t,x) \in [0, T] \times \overline \Omega$, $\forall \eta \in \R^n$; 

(AF3) $B(t,x',D_x)  = \sum_{|\alpha| \leq 1} b_\alpha(t,x) D_x^\alpha$, $\sum_{|\alpha| \leq 1} \|b_\alpha\|_{C^{\beta/2,1+\beta}((0, T) \times \partial \Omega)} \leq N$; if $|\alpha| = 1$, $b_\alpha$ is real valued and $\sum_{|\alpha| = 1} b_\alpha(t,x') \nu(x')^\alpha \geq \nu$ $\forall x' \in \partial \Omega$. 
}

\medskip

In order to study (\ref{eq4.1}), we consider the autonomous system
\begin{equation}\label{eq2.2}
\left\{\begin{array}{l}
D_t u(t,x) - A(x,D_x) u(t,x) = f(t,x), \quad t \in (0, T), x \in \Omega, \\ \\
D_t u(t,x') + B(x',D_x) u(t,x') = h(t,x'), \quad t \in (0, T),  x' \in \partial \Omega, \\ \\
u(0, x) = u_0(x), \quad x \in \Omega,
\end{array}
\right. 
\end{equation}
We recall the following result, which was proved in \cite{Gu4}: 

\begin{theorem}\label{th2.1}
Consider  system (\ref{eq2.2}), with the following conditions (AE1)-(AE3):

(AE1) (AF1) holds; 

(AE2)  $a_\alpha \in C^{\beta}(\Omega)$, $\forall \alpha \in \N_0^n$, $|\alpha| \leq 2$;  if $|\alpha| = 2$, $a_\alpha$ is real valued and $\sum_{|\alpha| = 2} a_\alpha(x) \eta^\alpha \geq \nu |\eta|^2$, for some $\nu \in \R^+$, $\forall x \in  \overline \Omega$, $\forall \eta \in \R^n$; 

(AE3) $B(x',D_x)  = \sum_{|\alpha| \leq 1} b_\alpha(x) D_x^\alpha$, $b_\alpha \in C^{1+\beta} (\partial \Omega)$ $\forall \alpha \in \N_0^n$, $|\alpha| \leq 1$; if $|\alpha| = 1$, $b_\alpha$ is real valued and 
$\sum_{|\alpha| = 1} b_\alpha(x') \nu(x')^\alpha > 0$ $\forall x' \in \partial \Omega$. 

Then the following conditions are necessary and sufficient, in order that (\ref{eq2.2}) have a unique solution $u$ in $C^{1+\beta/2,2+\beta}((0, T) \times \Omega)$, with $D_tu_{|(0, T) \times \partial \Omega)}$ in $B((0, T); C^{1+\beta}(\partial \Omega))$:

(a) $f \in C^{\beta/2,\beta}((0, T) \times \Omega))$; 

(b) $h \in C^{\beta/2,1+\beta}((0, T) \times \partial \Omega)$; 

(c) $u_0 \in C^{2+\beta}(\Omega)$; 

(d) $\forall \xi' \in \partial \Omega$
$$
A(\xi',D_\xi) u_0(\xi') +  f(0,\xi') = -B(\xi',D_\xi) u_0(\xi') + h(0,\xi').
$$
\end{theorem}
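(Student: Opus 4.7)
The necessity of (a), (b), (c) is immediate from the definitions of the solution class and the data spaces, while (d) follows by evaluating both equations of (\ref{eq2.2}) at $t=0$ and $x = \xi' \in \partial\Omega$, using $u(0,\cdot) = u_0$ to equate the two expressions for $D_t u(0,\xi')$.

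For sufficiency, my plan is to apply the continuation method (Proposition \ref{pr1.20B}). Let $X$ be the Banach space of $u \in C^{1+\beta/2,2+\beta}((0,T)\times\Omega)$ with $D_t u_{|(0,T)\times\partial\Omega} \in B((0,T); C^{1+\beta}(\partial\Omega))$, and let $Y$ be the Banach space of triples $(f,h,u_0)$ satisfying (a)--(d). Define $L \in \mathcal L(X,Y)$ by $Lu := (D_t u - A u,\, (D_t u + B u)_{|\partial\Omega},\, u(0,\cdot))$. By subtracting the time-independent lift $v(t,x):=u_0(x)$, one reduces to $u_0 = 0$, in which case (d) becomes the simpler compatibility $f(0,\xi') = h(0,\xi')$ on $\partial\Omega$. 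I would then homotope $(A,B)$ to a reference pair $(A_0,B_0)$ via the convex combinations $A_\varepsilon := (1-\varepsilon) A_0 + \varepsilon A$, $B_\varepsilon := (1-\varepsilon) B_0 + \varepsilon B$, with $A_0, B_0$ chosen so that uniform ellipticity of the principal part of $A_\varepsilon$ and the sign condition $\sum_{|\alpha|=1} b_{\varepsilon,\alpha}\,\nu^\alpha > 0$ persist along the path, and so that the endpoint $\varepsilon = 0$ corresponds to a problem I can solve directly. By Proposition \ref{pr1.20B} it then suffices to establish the uniform a priori estimate
$$
\|u\|_X \leq M\bigl(\|f\|_{C^{\beta/2,\beta}((0,T)\times\Omega)} + \|h\|_{C^{\beta/2,1+\beta}((0,T)\times\partial\Omega)}\bigr)
$$
for solutions of $L(\varepsilon)u = (f,h,0)$, uniform in $\varepsilon \in [0,1]$. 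The standard route is: interior parabolic Schauder estimates away from $\partial\Omega$; near the boundary, flattening $\partial\Omega$ by $C^{2+\beta}$ local charts, freezing coefficients at a reference point, and reducing to a constant-coefficient model problem on a half-space; then patching by a partition of unity and absorbing the lower-order perturbations that arise through interpolation based on (\ref{eq1.5}).

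The main obstacle is the model problem itself, namely $D_t u - \Delta u = f$ in $\R^n_+$, $D_t u - c\, D_{x_n} u = h$ on $\{x_n = 0\}$, $u(0,\cdot) = 0$, with $c > 0$, in which the dynamic boundary condition is itself a first-order evolution equation for the trace of $u$ coupled to the interior heat equation, so the classical parabolic boundary Schauder theory does not apply verbatim. To treat it I would take a tangential Fourier transform in $x' \in \R^{n-1}$ and a Laplace transform in $t$, reducing the problem to a family of ordinary differential equations in $x_n$ parametrized by the dual variables; these admit an explicit solution, and sharp estimates of the resulting Poisson-type kernels yield the required H\"older bounds for both $u$ and its boundary trace. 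This is essentially the content of the H\"older maximal-regularity analysis carried out in \cite{Gu4}. Once this is in place, solvability at $\varepsilon = 0$ combined with the uniform a priori estimate completes the application of Proposition \ref{pr1.20B} and establishes existence and uniqueness in $X$.
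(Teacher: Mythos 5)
The first thing to note is that this paper does not prove Theorem \ref{th2.1} at all: it is explicitly recalled from the preprint \cite{Gu4}, so there is no in-paper argument to measure your proposal against. Your outline of the strategy --- necessity by evaluating the two equations of (\ref{eq2.2}) at $t=0$ on $\partial\Omega$; sufficiency by lifting $u_0$ to reduce to zero initial data, the continuation method of Proposition \ref{pr1.20B}, localization with coefficient freezing, and a half-space model problem with the dynamic boundary condition $D_t u - c\,D_{x_n}u = h$ treated by tangential Fourier and temporal Laplace transforms --- is the standard and almost certainly the intended route, and it is consistent with how the present paper uses the theorem (the continuation scheme and the perturbation/absorption step reappear in the proof of Lemma \ref{le4.2}).

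As a self-contained proof, however, the proposal has a genuine gap, and it sits exactly where you place it: the model problem. Everything non-classical in Theorem \ref{th2.1} --- the simultaneous control of $u$ in $C^{1+\beta/2,2+\beta}((0,T)\times\Omega)$ and of the trace of $D_tu$ in $B((0,T);C^{1+\beta}(\partial\Omega))$, and the sufficiency of (a)--(d) --- is concentrated in the Poisson-kernel estimates for that half-space problem, which you assert rather than derive, deferring to \cite{Gu4}, the very source from which the theorem is being quoted; this makes the argument circular. Two smaller points: for the necessity of (a) and (b), ``immediate from the definitions'' skates over the need for Lemma \ref{le2.7B} (IV)--(V) to see that $D_tu$ and $a_\alpha D_x^\alpha u$ actually lie in $C^{\beta/2,\beta}((0,T)\times\Omega)$ and that the tangential regularity of the trace is as claimed; and in the continuation step you should make explicit that after reducing to $u_0=0$ the compatibility condition becomes $f(0,\cdot)_{|\partial\Omega}=h(0,\cdot)$, which is independent of $\varepsilon$, so that the target space $Y$ does not move along the homotopy.
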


\begin{lemma}\label{le4.1}
Assume that (AE1)-(AE3), $T_0 \in \R^+$ and $T \leq T_0$. Suppose that $f$, $h$, $u_0$ satisfy conditions (a)-(d) in the statement of Corollary \ref{co2.2}. Let $u$ be the solution in $C^{1+\beta/2,2+\beta}((0, T) \times \Omega)$, with $D_t u \in B((0, T); C^{1+\beta}(\partial \Omega))$
 of (\ref{eq2.2}). Then: 
 
 (I) there exists $C(T_0,A,B)$ in $\R^+$, such that
$$
\begin{array}{c}
\|u\|_{C^{1+\beta/2,2+\beta}((0, T) \times \Omega))} + \|u\|_{C^{\beta/2}((0, T); C^2(\Omega))} + \|u\|_{C^{\frac{1+\beta}{2}}((0, T); C^1(\Omega))}Ê\\ \\
\leq C(T_0,A,B) (\|f\|_{C^{\beta/2,\beta}((0, T) \times \Omega)} + \|u_0\|_{C^{2+\beta}(\Omega)}Ê+ \|h\|_{C^{\beta/2,1+\beta}((0, T) \times \partial \Omega)}). 
\end{array}
$$
(II) Suppose that $u_0 = 0$. Then, if $0 \leq \theta \leq 1$, 
\begin{equation}\label{eq4.2}
\|u\|_{C^\theta((0, T); C(\Omega))}Ê\leq C(T_0,A,B) T^{1-\theta}Ê(\|f\|_{C^{\beta/2,\beta}((0, T) \times \Omega)} + \|h\|_{C^{\beta/2,1+\beta}((0, T) \times \partial{\Omega})}),
\end{equation}
if $0 \leq \theta \leq 2+\beta$, 
\begin{equation}\label{eq4.3}
\|u\|_{B((0, T); C^\theta(\Omega))}Ê\leq C(T_0,A,B,\theta) T^{\frac{2+\beta-\theta}{2+\beta}} (\|f\|_{C^{\beta/2,\beta}((0, T) \times \Omega)} + \|h\|_{C^{\beta/2,1+\beta}((0, T) \times \partial \Omega)}). 
\end{equation}
(III) Again supposing $u_0 = 0$, we have 
$$
\|u\|_{C^{\beta/2}((0, T); C^1(\Omega))}Ê\leq C(T_0,A,B) T^{1/2}Ê(\|f\|_{C^{\beta/2,\beta}((0, T) \times \Omega)} + \|h\|_{C^{\beta/2,1+\beta}((0, T) \times \partial \Omega)}).
$$
\end{lemma}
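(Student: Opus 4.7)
The plan is to derive part (I) from Theorem \ref{th2.1} by extending the data constantly in time beyond $t = T$, so that the constant in the resulting estimate depends only on $T_0$, $A$, $B$; parts (II) and (III) will then follow from (I) by integrating in $t$ and interpolating in space.

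For (I), given $(f, h, u_0)$ satisfying (a)--(d) on $(0, T)$, I would extend $f$ and $h$ to $(0, T_0)$ by setting $\til f(t, \cdot) := f(T, \cdot)$ and $\til h(t, \cdot) := h(T, \cdot)$ for $t \in [T, T_0]$. Freezing at $t = T$ is non-expansive on both the spatial H\"older norm and the $C^{\beta/2}$-seminorm in time, and since the data at $t = 0$ is untouched, the compatibility condition (d) still holds. Theorem \ref{th2.1} then produces $\til u \in C^{1+\beta/2, 2+\beta}((0, T_0) \times \Omega)$, and the closed graph theorem applied on the \emph{fixed} interval $(0, T_0)$ yields
\[
\|\til u\|_{C^{1+\beta/2, 2+\beta}((0, T_0) \times \Omega)} \leq C(T_0, A, B) \bigl(\|\til f\|_{C^{\beta/2,\beta}} + \|\til h\|_{C^{\beta/2, 1+\beta}} + \|u_0\|_{C^{2+\beta}(\Omega)}\bigr).
\]
By the uniqueness statement of Theorem \ref{th2.1}, $\til u$ restricts to $u$ on $(0, T) \times \Omega$, so the same bound transfers to $u$, with the right-hand side controlled by the original data. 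The two remaining terms on the left of (I) are absorbed via Lemma \ref{le2.7B}(V).

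For (II), set $R := \|f\|_{C^{\beta/2,\beta}} + \|h\|_{C^{\beta/2,1+\beta}}$. From (I) with $u_0 = 0$ and Lemma \ref{le2.7B}(IV), $\|D_t u\|_{B((0, T) \times \Omega)} \leq C R$. Since $u(0, \cdot) = 0$, $u(t, x) = \int_0^t D_s u(s, x)\,ds$, giving $\|u\|_{B((0,T) \times \Omega)} \leq T\, C R$ and $|u(t, x) - u(s, x)| \leq |t - s|\, C R$; interpolating between these two bounds yields (\ref{eq4.2}) for $\theta \in [0, 1]$ (with at worst a $T_0$-factor when $\theta \in [0, 1)$). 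For (\ref{eq4.3}), apply (\ref{eq1.5}) pointwise in $t$,
\[
\|u(t, \cdot)\|_{C^\theta(\Omega)} \leq C \|u(t, \cdot)\|_{C(\Omega)}^{1 - \theta/(2+\beta)} \|u(t, \cdot)\|_{C^{2+\beta}(\Omega)}^{\theta/(2+\beta)},
\]
and combine with (\ref{eq4.2}) at $\theta = 0$ together with $\|u\|_{B((0,T); C^{2+\beta}(\Omega))} \leq C R$ from (I); the $T$-power comes out to $T^{(2+\beta-\theta)/(2+\beta)}$, as required.

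For (III), Lemma \ref{le2.7B}(V) combined with (I) already gives $\|u\|_{C^{(1+\beta)/2}((0,T); C^1(\Omega))} \leq C R$, so for every $s, t \in (0, T)$,
\[
\|u(t) - u(s)\|_{C^1(\Omega)} \leq |t - s|^{(1+\beta)/2} C R \leq T^{1/2} |t - s|^{\beta/2} C R,
\]
controlling the $C^{\beta/2}$-seminorm in time. The sup part follows from (\ref{eq4.3}) with $\theta = 1$: $\|u\|_{B((0,T); C^1(\Omega))} \leq C T^{(1+\beta)/(2+\beta)} R \leq C(T_0)\, T^{1/2} R$, since $(1+\beta)/(2+\beta) \geq 1/2$ whenever $\beta \geq 0$. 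I do not foresee any real obstacle; the only step needing care is the extension in (I), which is clean because freezing at $t = T$ leaves the $t = 0$ compatibility condition untouched.
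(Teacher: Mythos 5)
Your proposal is correct and follows essentially the same route as the paper: part (I) by extending $f$ and $h$ constantly in time to $(0,T_0)$ (which preserves the compatibility condition at $t=0$) and restricting the resulting solution, part (II) by writing $u(t,\cdot)=\int_0^t D_su(s,\cdot)\,ds$ and then interpolating in time and in space, and part (III) by bounding the $C^{\beta/2}$-seminorm by $T^{1/2}$ times the $C^{(1+\beta)/2}$-seminorm and the sup part via (II). The only cosmetic difference is that you justify the $(0,T_0)$-estimate via the closed graph theorem and carry out the time-interpolation by hand, where the paper cites the abstract fact $C^\theta((0,T);C(\Omega))\in J_\theta(C((0,T);C(\Omega)),C^1((0,T);C(\Omega)))$.
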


{\it Proof } We extend $f$ and $h$ to elements $\tilde f$ and $\tilde h$ in (respectively) $C^{\beta/2, \beta}((0, T_0) \times \Omega)$ and $C^{\beta/2,1+\beta}$ $((0, T_0) \times \partial \Omega)$: we set
$$
\tilde f(t,x) = \left\{\begin{array}{ll}
f(t,\xi) & \mbox{ if }Ê0 \leq t \leq T, \xi \in \Omega, \\ \\
f(T,\xi) & \mbox{ if }Ê0 \leq  T \leq t \leq T_0, \xi \in \Omega,
\end{array}
\right.
$$
$$
\tilde h(t,\xi') = \left\{\begin{array}{ll}
h(t,\xi') & \mbox{ if }Ê0 \leq t \leq T, \xi' \in \partial \Omega, \\ \\
h(T,\xi') & \mbox{ if }Ê0 \leq  T \leq t \leq T_0, \xi' \in \partial \Omega. 
\end{array}
\right.
$$
We denote with $\tilde u$ the solution to 
$$
\left\{\begin{array}{l}
D_t \tilde u(t,\xi) = A(\xi,D_\xi) \tilde u(t,\xi) + \tilde f(t,\xi), \quad t \in [0, T_0], \xi \in \Omega, \\ \\
\tilde u(0, \xi) = u_0(\xi), \quad \xi \in \Omega, \\ \\
D_t \tilde u(t,\xi') + B(\xi',D_{\xi})\tilde u(t,\xi') =  \tilde h(t,\xi'), \quad t \in [0, T_0], \xi' \in \partial \Omega. 
\end{array}
\right.
$$
Clearly, $\tilde u$ is an extension of $u$. So
$$
\begin{array}{c}
\|u\|_{C^{1+\beta/2, 2+\beta}((0, T) \times \Omega)}Ê+ \|D_t u_{|(0, T) \times \partial \Omega}\|_{B((0, T); C^{1+\beta}(\partial \Omega)} + \|u\|_{C^{\beta/2}((0, T); C^2 (\Omega))} \\ \\
\leq \|\tilde u\|_{C^{1+\beta/2, 2+\beta}((0, T_0) \times \Omega)}Ê+ \|D_t \tilde u_{|(0, T) \times \partial \Omega}\|_{B((0, T_0); C^{1+\beta}(\partial \Omega)} + \|\tilde u\|_{C^{\beta/2}((0, T_0); C^2 (\Omega))}  \\ \\
Ê\leq C(T_0,A,B)(\|\tilde f\|_{C^{\beta/2, \beta}((0, T_0) \times \Omega)} + \|u_0\|_{C^{2+\beta}(\Omega)} + \|\tilde h\|_{C^{\beta/2,1+\beta}((0, T_0) \times \partial \Omega)}) \\ \\
= C(T_0,A,B)(\|f\|_{C^{\beta/2, \beta}((0, T) \times \Omega)} + \|u_0\|_{C^{2+\beta}(\Omega)} + \|h\|_{C^{\beta/2,1+\beta}((0, T) \times \partial \Omega)}). 
\end{array}
$$
So (I) is proved.

Concerning (II), we begin by considering the case $\theta = 0$. If $0 \leq t \leq T$, we have
$$
u(t,\cdot) = \int_0^t D_s u(s,\cdot) ds, 
$$
so that
$$
\begin{array}{c}
\|u(t,\cdot)\|_{C(\Omega)} \leq \int_0^t \|D_s u(s,\cdot)\|_{C(\Omega)} ds \leq t \|u\|_{C^{1+\beta/2}((0, T); C(\Omega))} \\ \\
 \leq C(T_0,A,B) T (\|f\|_{C^{\beta/2, \beta}((0, T) \times \Omega)} + \|h\|_{C^{\beta/2,1+\beta}((0, T) \times \partial \Omega)}),
\end{array}
$$
employing (I). The cases $0 < \theta \leq 1$ follow from the foregoing from $\theta = 0$ and the fact that $C^\theta((0, T); C(\Omega)) \in J_\theta(C((0, T); C(\Omega)),C^1((0, T); C(\Omega))$. 
So (\ref{eq4.2}) is proved. Finally (\ref{eq4.3}) follows from (\ref{eq4.2}) with $\theta = 0$, (I) and the fact that $C^\theta(\Omega)) \in J_{\theta/(2+\beta)}Ê(C(\Omega), C^{2+\beta}(\Omega))$. 

It remains to consider (III). We have
$$
\begin{array}{c}
\|u\|_{C^{\beta/2}((0, T); C^1(\Omega))} = \|u\|_{C((0, T); C^1(\Omega))} + [u]_{C^{\beta/2}((0, T); C^1(\Omega))}  \\ \\
\leq \|u\|_{C((0, T); C^1(\Omega))}  + T^{1/2} [u]_{C^{(1+\beta)/2}((0, T); C^1(\Omega))},
\end{array}
$$
and the conclusion follows from (I) and (II). 

$\Box$

\begin{lemma}\label{le4.2}
We consider a system in the form
\begin{equation}\label{eq4.4}
\left\{\begin{array}{l}
D_t u(t,\xi) - A(\xi,D_\xi) u(t,\xi) - \mathcal A(t) u(t,\cdot)(\xi) = f(t,\xi), \quad t \in (0, T), \xi \in \Omega, \\ \\
D_t u(t,\xi') + B(\xi',D_\xi) u(t,\xi') + \mathcal B(t)u(t,\cdot)(\xi') = h(t,\xi'), \quad t \in (0, T),  \xi' \in \partial \Omega, \\ \\
u(0, \xi) = u_0(\xi), \quad \xi \in \Omega,
\end{array}
\right. 
\end{equation}
with the following conditions:

(a) (AE1)-(AE3) are satisfied; 

(b) $\forall t \in [0, T]$ $\mathcal A(t) \in \mL(C^2(\Omega), C(\Omega)) \cap \mL(C^{2+\beta}(\Omega), C^\beta(\Omega))$ and, for certain $\delta$ and $M$ in $\R^+$, $\forall  t \in [0, T]$, $\forall u \in C^{2+\beta}(\Omega))$, 
$$
\begin{array}{c}
\|\mathcal A(t)u\|_{C(\Omega))} \leq  \delta \|u\|_{C^{2}(\Omega)} + M \|u\|_{C^{1}(\Omega)}, \\ \\
 \|Ê\mathcal A\|_{C^{\beta/2}((0, T); \mL(C^2(\Omega), C(\Omega)))} \leq M, \\ \\
\|\mathcal A(t)u\|_{C^\beta(\Omega)} \leq \delta \|u\|_{C^{2+\beta}(\Omega)} + M \|u\|_{C^{2}(\Omega)};
\end{array}
$$

(c) $\forall t \in [0, T]$ $\mathcal B(t) \in \mL(C^1(\Omega), C(\partial \Omega)) \cap \mL(C^{2+\beta}(\Omega), C^{1+\beta}(\partial \Omega))$ and, $\forall  t \in [0, T]$, $\forall u \in C^{2+\beta}(\Omega)$, 
$$
\begin{array}{c}
\|\mathcal B\|_{C^{\beta/2}((0, T); \mL(C^1(\Omega), C(\partial \Omega)))} \leq M, \\ \\
\|\mathcal B(t)u\|_{C^{1+\beta}(\partial \Omega)} \leq \delta \|u\|_{C^{2+\beta}(\Omega)} + M \|u\|_{C^{2}(\Omega)}.
\end{array}
$$
Consider the system (\ref{eq4.4}). Then:

(I) there exists $\delta_0 \in \R^+$, depending only on $A$ and $B$, such that, if $\delta \leq \delta_0$, $f \in C^{\beta/2,\beta}((0, T) \times \Omega)$,  $h \in C^{\beta/2,1+\beta}((0, T) \times \partial \Omega)$, $u_0 \in C^{2+\beta}(\Omega)$ and
$$
A(\xi',D_\xi) u_0(\xi') + \mathcal A(0) u_0(\xi') + f(0,\xi') = -B(\xi',D_\xi) u_0(\xi') - \mathcal B(0)u_0(\xi') + h(0,\xi') \quad \forall \xi' \in \partial \Omega, 
$$
(\ref{eq4.4}) has a unique solution $u$ in $C^{1+\beta/2,2+\beta}((0, T) \times \Omega)$, with $D_t u_{|(0, T) \times \partial \Omega} \in B((0, T); C^{1+\beta}(\partial \Omega))$.

Let $T_0 \in \R^+$. Assume that (b) and (c) are satisfied and (I) holds, replacing $T$ with $T_0$. Take $0 < T \leq T_0$. Then: 

(II) there exists $C(T_0,A,B,\delta,M)$ in $\R^+$, such that
$$
\begin{array}{c}
\|u\|_{C^{1+\beta/2,2+\beta}((0, T) \times \Omega))}Ê+ \|D_tu_{|(0, T) \times \partial \Omega}\|_{B((0, T); C^{1+\beta}(\partial{\Omega}))}Ê+ \|u\|_{C^{\beta/2}((0, T); C^2(\Omega))} + \|u\|_{C^{\frac{1+\beta}{2}}((0, T); C^1(\Omega))}Ê\\ \\
\leq C(T_0,A,B,\delta,M) (\|f\|_{C^{\beta/2,\beta}((0, T) \times \Omega)} + \|u_0\|_{C^{2+\beta}(\Omega)}Ê+ \|h\|_{C^{\beta/2,1+\beta}((0, T) \times \partial \Omega)}). 
\end{array}
$$
(III) Suppose that $u_0 = 0$. Then, if $0 \leq \theta \leq 1$, 
\begin{equation}\label{eq4.2}
\|u\|_{C^\theta((0, T); C(\Omega))}Ê\leq C(T_0,A,B,\delta,M) T^{1-\theta}Ê(\|f\|_{C^{\beta/2,\beta}((0, T) \times \Omega)} + \|h\|_{C^{\beta/2,1+\beta}((0, T) \times \partial{\Omega})}),
\end{equation}
if $0 \leq \theta \leq 2+\beta$, 
\begin{equation}\label{eq4.3}
\|u\|_{B((0, T); C^\theta(\Omega))}Ê\leq C(T_0,A,B,\delta,M, \theta) T^{\frac{2+\beta-\theta}{2+\beta}} (\|f\|_{C^{\beta/2,\beta}((0, T) \times \Omega)} + \|h\|_{C^{\beta/2,1+\beta}((0, T) \times \partial \Omega)}). 
\end{equation}
(IV) Again supposing $u_0 = 0$, we have 
$$
\|u\|_{C^{\beta/2}((0, T); C^1(\Omega))}Ê\leq C(T_0,A,B,\delta,M) T^{1/2}Ê(\|f\|_{C^{\beta/2,\beta}((0, T) \times \Omega)} + \|h\|_{C^{\beta/2,1+\beta}((0, T) \times \partial \Omega)}).
$$

\end{lemma}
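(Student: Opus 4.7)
\emph{Proof plan.} The core idea is to view problem (\ref{eq4.4}) as a perturbation of (\ref{eq2.2}). If $u$ is a solution of (\ref{eq4.4}), then it also solves (\ref{eq2.2}) with modified data $\tilde f := f + \mathcal{A}(\cdot)u$ and $\tilde h := h - \mathcal{B}(\cdot)u$. Hypotheses (b) and (c), together with Lemma~\ref{le2.7B}(V), ensure $\tilde f \in C^{\beta/2,\beta}((0,T)\times\Omega)$ and $\tilde h \in C^{\beta/2,1+\beta}((0,T)\times\partial\Omega)$, and the compatibility condition appearing in~(I) is exactly condition~(d) of Theorem~\ref{th2.1} for the triple $(\tilde f,\tilde h,u_0)$. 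Thus the whole analysis reduces to controlling the perturbation norms in terms of
$$
\|u\|_X := \|u\|_{C^{1+\beta/2,2+\beta}((0,T)\times\Omega)} + \bigl\|D_tu_{|(0,T)\times\partial\Omega}\bigr\|_{B((0,T);C^{1+\beta}(\partial\Omega))}.
$$

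For part~(II), I apply Lemma~\ref{le4.1}(I) to the reformulated problem. The $B((0,T);C^\beta(\Omega))$-norm of $\mathcal{A}u$ is bounded by $\delta\|u\|_{B((0,T);C^{2+\beta}(\Omega))} + M\|u\|_{B((0,T);C^2(\Omega))}$ directly from (b). For the $C^{\beta/2}((0,T);C(\Omega))$-part I split
$$
\mathcal{A}(s)u(s,\cdot)-\mathcal{A}(t)u(t,\cdot) = \bigl(\mathcal{A}(s)-\mathcal{A}(t)\bigr)u(s,\cdot) + \mathcal{A}(t)\bigl(u(s,\cdot)-u(t,\cdot)\bigr),
$$
treating the first piece with the H\"older-in-$t$ bound on $\mathcal{A}$ and the second with $\|\mathcal{A}(t)w\|_{C(\Omega)} \leq \delta\|w\|_{C^2(\Omega)}+M\|w\|_{C^1(\Omega)}$ combined with the regularity $u\in C^{\beta/2}((0,T);C^2(\Omega))\cap C^{(1+\beta)/2}((0,T);C^1(\Omega))$ supplied by Lemma~\ref{le2.7B}(V). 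The treatment of $\mathcal{B}u$ is parallel. Summing gives
$$
\|u\|_X \leq C(T_0,A,B)\bigl(\|f\|_{C^{\beta/2,\beta}}+\|h\|_{C^{\beta/2,1+\beta}}+\|u_0\|_{C^{2+\beta}}\bigr) + C(T_0,A,B)\bigl(\delta\|u\|_X + M\,\Psi(u)\bigr),
$$
where $\Psi(u)$ is a linear combination of the lower-order norms listed above. Choosing $\delta_0$ so that $C(T_0,A,B)\delta_0\leq 1/2$ absorbs the $\delta$-term. The residual $M\,\Psi(u)$ is handled by partitioning $[0,T_0]$ into finitely many sub-intervals of length $T_1$: on each sub-interval, after subtracting a fixed extension of the restart value so as to reduce to zero initial data, parts~(III)-(IV) (to be established by the same template) bound $\Psi(u)$ by $T_1^\eta$ times the $X$-norm for some $\eta>0$, and $T_1$ small enough lets this be absorbed. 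The compatibility condition is inherited at each restart because $u$ itself satisfies the boundary equation pointwise.

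Parts (III) and (IV) follow by the identical absorption scheme, but starting from Lemma~\ref{le4.1}(II), (III) in place of (I); these immediately contribute the gain factors $T^{1-\theta}$, $T^{(2+\beta-\theta)/(2+\beta)}$, and $T^{1/2}$ on the right-hand side at the unperturbed level. For existence~(I), I apply Banach's fixed point theorem on a short initial interval. Set $Z_{u_0} := \{v\in X : v(0,\cdot)=u_0\}$ and define $\Theta(v)$ to be the unique solution granted by Theorem~\ref{th2.1} of (\ref{eq2.2}) with data $(f+\mathcal{A}(\cdot)v,\;h-\mathcal{B}(\cdot)v,\;u_0)$; since $v(0)=u_0$, the compatibility required by Theorem~\ref{th2.1} coincides with the one assumed in~(I), so $\Theta$ maps $Z_{u_0}$ into itself. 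For $v_1,v_2\in Z_{u_0}$, the difference $w:=\Theta(v_1)-\Theta(v_2)$ solves the unperturbed problem with zero initial value and sources $\bigl(\mathcal{A}(v_1-v_2),-\mathcal{B}(v_1-v_2)\bigr)$; interpolation applied to $v_1-v_2\in X$ with zero initial value yields a $T^\eta$-gain on each lower-order norm, so Lemma~\ref{le4.1}(I) produces
$$
\|w\|_X \leq C(T_0,A,B)\bigl(\delta + MT^\eta\bigr)\|v_1-v_2\|_X,
$$
a contraction for $\delta\leq\delta_0$ and $T$ sufficiently small. Finitely many restarts, with the data size at each restart controlled by the already-established a priori estimate~(II), extend the solution to $[0,T_0]$.

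The main technical difficulty is the circular dependence between (II) and (III)-(IV): the $M$-contribution in (II) is absorbed via small-$T$ gains that themselves come from (III), (IV), while the latter are proved on the same unperturbed reformulation. The clean resolution is to carry out a single absorption argument on a short initial interval $[0,T_1]$ that delivers all four estimates simultaneously, and only afterwards extend to $[0,T_0]$ by iteration.
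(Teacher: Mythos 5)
Your proposal is correct and its overall architecture coincides with the paper's: rewrite (\ref{eq4.4}) as the autonomous problem (\ref{eq2.2}) with data $f+\mathcal Au$, $h-\mathcal Bu$, estimate the perturbations by the splitting $\mathcal A(t)u(t)-\mathcal A(s)u(s)=[\mathcal A(t)-\mathcal A(s)]u(t)+\mathcal A(s)(u(t)-u(s))$, absorb the $\delta$-terms by choosing $\delta_0$ with $C(A,B)\delta_0\leq 1/2$ and the $M$-terms by shortness of the time interval, then iterate over finitely many subintervals, checking the compatibility condition at each restart via the pointwise boundary equation. The circularity you flag at the end is resolved in the paper exactly as you suggest: the a priori estimate (\ref{eq4.11}) is proved in one shot on a short interval by placing the weighted lower-order norms $T^{-\beta/(2+\beta)}\|u\|_{B((0,T);C^2(\Omega))}$ and $T^{-1/2}\|u\|_{C^{\beta/2}((0,T);C^1(\Omega))}$ on the \emph{left-hand side}, these weights coming from Lemma \ref{le4.1} (II)--(III) applied to the autonomous reformulation (not from the yet-unproved parts (III)--(IV) of the present lemma), so no circle arises. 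The one genuine divergence is the solvability step in (I): you use Banach's fixed point theorem for the map $v\mapsto\Theta(v)$ solving (\ref{eq2.2}) with data $(f+\mathcal Av,\,h-\mathcal Bv,\,u_0)$, whereas the paper first reduces to $u_0=0$ and then applies the continuation method (Proposition \ref{pr1.20B}) to the family $T_\rho$ with perturbations $\rho\mathcal A$, $\rho\mathcal B$, using the uniform a priori estimate and the invertibility of $T_0$ from Theorem \ref{th2.1}. Both are legitimate; your contraction argument is more elementary and yields short-time uniqueness for free (the paper instead proves uniqueness separately by an infimum-of-first-disagreement argument), while the continuation method needs only the a priori bound and not a quantitative Lipschitz estimate on the perturbation, which is why the paper can keep hypothesis (b)--(c) in the stated operator-norm form without revisiting them for differences.
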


{\it Proof } We prove the result in several steps. 

{\it Step 1. We consider the case $u_0 = 0$ and prove an a priori estimate if $T$ is sufficiently small.  } 

So we have
$$
 f(0,\xi') =  h(0,\xi') \quad \forall \xi' \in \partial \Omega, 
$$
and we suppose that $u \in C^{1+\beta/2,2+\beta}((0, T) \times \Omega)$, with $D_t u_{|(0, T) \times \partial \Omega} \in B((0, T); C^{1+\beta}(\partial \Omega))$, solves (\ref{eq4.4}), with $u_0 = 0$. It is easily seen that, if we set
$$
F(t,\xi):= \mathcal A(t) u(t,\cdot)(\xi) + f(t,\xi), \quad (t,\xi) \in (0, T) \times \Omega,
$$
and
$$
H(t,\xi'):= -\mathcal B(t) u(t,\cdot)(\xi') + h(t,\xi'), \quad (t,\xi') \in (0, T) \times \partial \Omega,
$$
we have that $F \in C^{\beta/2,\beta}((0, T) \times \Omega)$ and $H \in C^{\beta/2,1+\beta}((0, T) \times \partial \Omega)$. So, by Lemma \ref{le4.1}, if (say) $T \leq 1$, we have
\begin{equation}\label{eq4.7}
\begin{array}{c}
\|u\|_{C^{1+\beta/2,2+\beta}((0, T) \times \Omega))}Ê+ \|D_tu_{|(0, T) \times \partial \Omega}\|_{B((0, T); C^{1+\beta}(\partial{\Omega}))} + \|u\|_{C^{\beta/2}((0, T); C^2(\Omega))} + \|u\|_{C^{\frac{1+\beta}{2}}((0, T); C^1(\Omega))}ÊÊ\\ \\
+ T^{-\frac{\beta}{2+\beta}} \|u\|_{B((0, T); C^2(\Omega))} + T^{-1/2}Ê \|u\|_{C^{\beta/2}((0, T); C^1(\Omega))} \\ \\
 \leq C(A,B)(\|f\|_{C^{\beta/2,\beta}((0, T) \times \Omega))} + \|h\|_{C^{\beta/2,1+\beta}((0, T) \times \partial \Omega))} \\ \\
+ \|\mathcal Au\|_{C^{\beta/2,\beta}((0, T) \times \Omega))} + \|\mathcal Bu\|_{C^{\beta/2,1+\beta}((0, T) \times \partial \Omega))}).
\end{array}
\end{equation}
If $0 \leq s < t \leq T$, we have
$$
\begin{array}{c}
\|\mathcal A(t)u(t) - \mathcal A(s)u(s)\|_{C( \Omega))} \\ \\
\leq \|[\mathcal A(t) - \mathcal A(s)]u(t)\|_{C( \Omega))} + \|\mathcal A(s)(u(t) - u(s))\|_{C( \Omega))} \\ \\
\leq M (t-s)^{\beta/2}Ê\|u(t)\|_{C^2(\Omega)} + \delta \|u(t) - u(s)\|_{C^2( \Omega))} + M \|u(t) - u(s)\|_{C^1( \Omega))}
\end{array}
$$
so that
$$
[\mathcal A u]_{C^{\beta/2}((0, T); C(\Omega))} \leq M \|u\|_{B((0, T); C^2(\Omega))} + \delta \|u\|_{C^{\beta/2}((0, T); C^2(\Omega))}+ M \|u\|_{C^{\beta/2}((0, T); C^1(\Omega))}
$$
and
\begin{equation}
\begin{array}{c}
\|\mathcal Au\|_{C^{\beta/2,\beta}((0, T) \times \Omega))} = \|\mathcal Au\|_{B((0, T); C^\beta(\Omega))} + [\mathcal A u]_{C^{\beta/2}((0, T); C(\Omega))} \\ \\
\leq \delta (\|u\|_{B((0, T); C^{2+\beta}(\Omega)} +  \|u\|_{C^{\beta/2}((0, T); C^2(\Omega))}) Ê+ 2M \|u\|_{B((0, T); C^{2}(\Omega))} + M \|u\|_{C^{\beta/2}((0, T); C^1(\Omega))}.
\end{array}
\end{equation}
Analogously, we can deduce, from (c), 
\begin{equation}\label{eq4.9}
\begin{array}{c}
\|\mathcal Bu\|_{C^{\beta/2,1+\beta}((0, T) \times \partial \Omega))} = \|\mathcal Bu\|_{B((0, T); C^{1+\beta}(\partial \Omega))} + [\mathcal B u]_{C^{\beta/2}((0, T); C(\partial \Omega))} \\ \\
\leq \delta \|u\|_{B((0, T); C^{2+\beta}(\Omega))} + M(  \|u\|_{B((0, T); C^{2}(\Omega))} + \|u\|_{C^{\beta/2}((0, T); C^1(\Omega))})
\end{array}
\end{equation}
and, from (\ref{eq4.7})-(\ref{eq4.9}), 
$$
\begin{array}{c}
\|u\|_{C^{1+\beta/2,2+\beta}((0, T) \times \Omega))}Ê+ \|D_tu_{|(0, T) \times \partial \Omega}\|_{B((0, T); C^{1+\beta}(\partial{\Omega}))} + \|u\|_{C^{\beta/2}((0, T); C^2(\Omega))}Ê\\ \\
+ T^{-\frac{\beta}{2+\beta}} \|u\|_{B((0, T); C^2(\Omega))} + T^{-1/2}Ê \|u\|_{C^{\beta/2}((0, T); C^1(\Omega))} \\ \\
 \leq C(A,B)[\|f\|_{C^{\beta/2,\beta}((0, T) \times \Omega))} + \|h\|_{C^{\beta/2,1+\beta}((0, T) \times \partial \Omega))} \\ \\
+ 2\delta (\|u\|_{B((0, T); C^{2+\beta}(\Omega))} +  \|u\|_{C^{\beta/2}((0, T); C^2(\Omega))}) Ê+ 3M \|u\|_{B((0, T); C^{2}(\Omega))} \\ \\
+ 2M \|u\|_{C^{\beta/2}((0, T); C^1(\Omega))}].
\end{array}
$$
Now we take $\delta \leq \delta_0$ in $\R^+$ and $T$ so small that  
\begin{equation}\label{eq4.10}
2C(A,B) \delta_0 \leq \frac{1}{2}, \quad 3MC(A,B) \leq \frac{1}{2} T^{-\frac{\beta}{2+\beta}}, \quad 2MC(A,B) \leq \frac{1}{2} T^{-1/2}. 
\end{equation}
We deduce the a priori estimate
\begin{equation}\label{eq4.11}
\begin{array}{c}
\|u\|_{C^{1+\beta/2,2+\beta}((0, T) \times \Omega))}Ê+ \|D_tu_{|(0, T) \times \partial \Omega}\|_{B((0, T); C^{1+\beta}(\partial{\Omega}))} + \|u\|_{C^{\beta/2}((0, T); C^2(\Omega))} + \|u\|_{C^{\frac{1+\beta}{2}}((0, T); C^1(\Omega))}ÊÊ\\ \\
+ T^{-\frac{\beta}{2+\beta}} \|u\|_{B((0, T); C^2(\Omega))} + T^{-1/2}Ê \|u\|_{C^{\beta/2}((0, T); C^1(\Omega))} \\ \\
 \leq 2 C(A,B)[\|f\|_{C^{\beta/2,\beta}((0, T) \times \Omega))} + \|h\|_{C^{\beta/2,1+\beta}((0, T) \times \partial \Omega))}].
 \end{array}
\end{equation}

{\it Step 2. We show the existence of a unique solution if $u_0 = 0$ and (\ref{eq4.10}) holds. 
}

To this aim, we consider, for every $\rho \in [0, 1]$, the system
\begin{equation}\label{eq4.12}
\left\{\begin{array}{l}
D_t u(t,\xi) - A(\xi,D_\xi) u(t,\xi) - \rho \mathcal A(t) u(t,\cdot)(\xi) = f(t,\xi), \quad t \in (0, T), \xi \in \Omega, \\ \\
D_t u(t,\xi') + B(\xi',D_\xi) u(t,\xi') + \rho \mathcal B(t)u(t,\cdot)(\xi') =  h(t,\xi'), \quad t \in (0, T),  \xi' \in \partial \Omega, \\ \\
u(0, \xi) = u_0(\xi), \quad \xi \in \Omega,
\end{array}
\right. 
\end{equation}
We set
$$
X:= \{u \in C^{1+\beta/2,2+\beta}((0, T) \times \Omega): D_tu_{|(0, T) \times \partial \Omega} \in B((0, T); C^{1+\beta}(\partial \Omega)), Êu(0,\cdot) = 0\}, 
$$
$$
Y:= \{(f,h) \in C^{\beta/2,\beta}((0, T) \times \Omega) \times C^{\beta/2,1+\beta}((0, T) \times \partial \Omega): f(0,\cdot)_{|\partial \Omega} = h(0,\cdot)\}.
$$
$X$ and $Y$ are Banach spaces with natural norms. For each $\rho$ we define the following operator $T_\rho$: 
$$
\left\{\begin{array}{l}
T_\rho : X \to Y, \\ \\
T_\rho u = (D_t u - A(\xi,D_\xi) u - \rho \mathcal Au, D_t u_{|(0, T) \times \partial \Omega} + B(\xi',D_\xi) u + \rho \mathcal B u).  
\end{array}
\right.
$$
Then $\rho \to T_\rho$ belongs to $C([0, 1]; \mL(X,Y))$ and $T_0$ is a linear and topological isomorphism between $X$ and $Y$ by Theorem \ref{th2.1}. Moreover, it is clear that for every $\rho \in [0, 1]$, (b) and (c) are satisfied if we replace $\mathcal A(t)$ with $\rho \mathcal A(t)$ and $\mathcal B(t)$ with $\rho \mathcal B(t)$ with the same constants $\delta$ and $M$. So, by the a priori estimate (\ref{eq4.11}) and the continuation principle, we deduce Step 2. 

{\it Step 3.  We prove (I), continuing to assume $\delta \leq \delta_0$ and $\delta_0$ and $T$ satisfying (\ref{eq4.10}).  }

We take $v(t,\xi):= u(t,\xi) - u_0(\xi)$ as new unknown. We obtain the system
%eq4.13
\begin{equation}\label{eq4.13A}
\left\{\begin{array}{l}
D_t v(t,\xi) - A(\xi,D_\xi) v(t,\xi) - \mathcal A(t) v(t,\cdot)(\xi) = A(\xi,D_\xi) u_0(\xi) +  \mathcal A(t) u_0(\xi) + f(t,\xi), \quad t \in (0, T), \xi \in \Omega, \\ \\
D_t v(t,\xi') + B(\xi',D_\xi) v(t,\xi') + \mathcal B(t)v(t,\cdot)(\xi') = -B(\xi',D_\xi)u_0(\xi') -\mathcal B(t)u_0(\xi') + h(t,\xi'), \\ \\
 t \in (0, T), \quad   \xi' \in \partial \Omega, \\ \\
v(0, \xi) = 0, \quad \xi \in \Omega,
\end{array}
\right. 
\end{equation}
to which Step 2 is applicable. 

{\it Step 4. Proof of (I).}Ê

We begin by showing the uniqueness. So we suppose that $u \in C^{1+\beta/2,2+\beta}((0, T) \times \Omega)$, $D_t u_{|(0, T) \times \partial \Omega} \in B((0, T); C^{1+\beta}(\partial \Omega))$ and
$$
\left\{\begin{array}{l}
D_t u(t,\xi) - A(\xi,D_\xi) u(t,\xi) - \mathcal A(t) u(t,\cdot)(\xi) = 0, \quad t \in (0, T), \xi \in \Omega, \\ \\
D_t u(t,\xi') + B(\xi',D_\xi) u(t,\xi') + \mathcal B(t)u(t,\cdot)(\xi') = 0, \quad t \in (0, T),  \xi' \in \partial \Omega, \\ \\
u(0, \xi) = 0, \quad \xi \in \Omega,
\end{array}
\right. 
$$
We suppose, by contradiction, that $u \neq 0$. Let $\tau:= \inf \{t \in [0, T] : u(t,\cdot) \neq 0\}$. Then $0 \leq \tau < T$ and $u(\tau,\cdot) = 0$. We set
$$
u_1(t,\cdot):= u(\tau+t, \cdot), \quad t \in [0, T-\tau]. 
$$
Then, if $0 < T_1 \leq T-\tau$, $u_1$ solves the system
$$
\left\{\begin{array}{l}
D_t u_1(t,\xi) - A(\xi,D_\xi) u_1(t,\xi) - \mathcal A(\tau+t) u_1(t,\cdot)(\xi) = 0, \quad t \in (0, T_1), \xi \in \Omega, \\ \\
D_t u_1(t,\xi') + B(\xi',D_\xi) u_1(t,\xi') + \mathcal B(\tau+t)u_1(t,\cdot)(\xi') = 0, \quad t \in (0, T_1),  \xi' \in \partial \Omega, \\ \\
u_1(0, \xi) = 0, \quad \xi \in \Omega. 
\end{array}
\right. 
$$
Clearly, the families of operators $\{\mathcal A(\tau+t): t \in [0, T - \tau]\}$ and $\{\mathcal B(\tau+t): t \in [0, T - \tau]\}$ satisfy the conditions (b) and (c) with the same constants $\delta$ and $M$. So we deduce from Step 4 that, if we take $T_1$ sufficiently small, $u_{1|(0, T_1) \times \Omega}Ê= 0$, so that $u_{|(0, \tau + T_1) \times \Omega}Ê= 0$, in contradiction with the definition of $\tau$. 

We show the existence. We suppose that $T$ does not satisfy one of the majorities in (\ref{eq4.10}) and replace it with $\tau \in (0, T)$, in such a way that these majorities are satisfied by $\tau$. So we can apply Step 3 and get a unique solution
$u_1$ with domain $[0, \tau] \times \overline \Omega$. We observe that 
\begin{equation}\label{eq4.13}
\begin{array}{c}
A(\xi',D_\xi) u_1(\tau,\xi') + \mathcal A(\tau) u_1(\tau,\cdot)(\xi') + f(\tau,\xi') = D_tu_1(\tau, \xi') \\ \\
= -B(\xi',D_\xi) u_1(\tau,\xi') - \mathcal B(\tau)u_1(\tau,\cdot)(\xi') + h(\tau,\xi') \quad \forall \xi' \in \partial \Omega, 
\end{array}
\end{equation}
and consider the system
$$
\left\{\begin{array}{l}
D_t u_2(t,\xi) - A(\xi,D_\xi) u_2(t,\xi) - \mathcal A(\tau+t) u_2(t,\cdot)(\xi) = f(\tau+t,\xi), \quad t \in (0, \tau \wedge (T-\tau)), \xi \in \Omega, \\ \\
D_t u_2(t,\xi') + B(\xi',D_\xi) u_2(t,\xi') + \mathcal B(\tau+t)u_2(t,\cdot)(\xi') = h(\tau+t,\xi'), \quad t \in (0, \tau \wedge (T-\tau)),  \xi' \in \partial \Omega, \\ \\
u_2(0, \xi) = u_1(\tau,\xi), \quad \xi \in \Omega,
\end{array}
\right. 
$$
By (\ref{eq4.13}) Step 3 is applicable and it is easily seen that, if we set
$$
u(t,\xi) = \left\{\begin{array}{ll}
u_1(t,\xi) & \mbox{\rm if }Ê(t, \xi) \in [0, \tau] \times \overline \Omega, \\ \\
u_2(t-\tau,\xi) & \mbox{\rm if }Ê(t, \xi) \in [\tau, 2\tau \wedge T] \times \overline \Omega, 
\end{array}
\right.
$$
$u \in C^{1+\beta/2,2+\beta}((0, (2\tau) \wedge T) \times \Omega)$, with $D_t u_{|(0, (2\tau) \wedge T) \times \partial \Omega} \in B((0, (2\tau) \wedge T); C^{1+\beta}(\partial \Omega))$ and solves (\ref{eq4.4}) if we replace $T$ with 
$(2\tau) \wedge T$. In case $2\tau < T$, we can iterate the procedure and in a finite number of steps we construct a solution in $(0, T) \times \Omega$. 

II) If $T$ is so small that (\ref{eq4.10}) holds and $u_0 = 0$, we get the conclusion from the a priori estimate (\ref{eq4.11}). The case $u_0 \neq 0$, can be deduced applying the foregoing to the solution $v$ to (\ref{eq4.13A}). If $T$ does 
not satisfy (\ref{eq4.10}), we fix $T_1$ in $(0, T)$, satisfying it and obtain (II) applying the estimates in an interval of length, less or equal than $T_1$ $[T/T_1] + 1$ times. 

(III)-(IV) can be proved with the same arguments employed for the analogous estimates in Lemma \ref{le4.1}.

$\Box$

\begin{remark}\label{re4.3}
{\rm Lemma \ref{le4.1} is applicable in case
\begin{equation}
\mathcal A(t) u(\xi) = \sum_{|\alpha| \leq 2} r_\alpha(t,\xi) D_\xi^\alpha u(\xi), \quad (t,\xi) \in [0, T]Ê\times \overline \Omega,
\end{equation}
and
\begin{equation}
\mathcal B(t) u(\xi') = \sum_{|\alpha| \leq 1} \sigma_\alpha(t,\xi') D_x^\alpha u(\xi'), \quad (t,\xi) \in [0, T]Ê\times \partial \Omega,
\end{equation}
with $r_\alpha \in C^{\beta/2,\beta}((0, T) \times \Omega)$ ($|\alpha| \leq 2$) and $ \sigma_\alpha \in C^{\beta/2,1+\beta}((0, T) \times \partial \Omega)$ ($|\alpha| \leq 1$),
if 
\begin{equation}
\sum_{|\alpha| = 2} \|r_\alpha\|_{C((0, T) \times \Omega)} + \sum_{|\alpha| = 1} \|\sigma_\alpha\|_{C((0, T) \times \partial \Omega)} \leq d
\end{equation}
and 
\begin{equation}
\sum_{|\alpha| \leq 2} \|r_\alpha\|_{C^{\beta/2,\beta}((0, T) \times \Omega)} + \sum_{|\alpha| \leq 1} \|\sigma_\alpha\|_{C^{\beta/2,1+\beta}((0, T) \times \partial \Omega)} \leq N,
\end{equation}
with $d$ and $N$ in $\R^+$. Then we have
\begin{equation}
\|\mathcal A(t) u\|_{C(\Omega)} \leq C(n)(d \|u\|_{C^2(\Omega)} + N \|u\|_{C^1(\Omega)}),
\end{equation}

\begin{equation}
\|Ê\mathcal A\|_{C^{\beta/2}((0, T); \mL(C^2(\Omega), C(\Omega)))} \leq C(n)N, 
\end{equation}

\begin{equation}
\|\mathcal A(t)u\|_{C^\beta(\Omega)} \leq C(n)(\delta \|u\|_{C^{2+\beta}(\Omega)} + N \|u\|_{C^{2}(\Omega)}).
\end{equation}
Moreover, 

$$
\begin{array}{c}
\|\mathcal B\|_{C^{\beta/2}((0, T); \mL(C^1(\Omega), C(\partial \Omega)))} \leq C(n)N, \\ \\
\|\mathcal B(t)u\|_{C^{1+\beta}(\partial \Omega)} \leq C(n)(\delta \|u\|_{C^{2+\beta}(\Omega)} + N \|u\|_{C^{2}(\Omega)}).
\end{array}
$$
So we can take $\delta = C(n)d$ and $M = C(n)N$. 
}
\end{remark}

%le4.4
\begin{lemma}\label{le4.4A}
Let $\Omega$ be an open bounded subset in $\R^n$, lying on one side of its boundary $\partial \Omega$, which is a submanifold of class $C^{2+\beta}$ of $\R^n$. Let $N$ and $\nu$ belong to $\R^+$. We set
$$
\begin{array}{ll}
A(N,\nu)  := & \{((a_\alpha)_{|\alpha| \leq 2}, (b_\gamma)_{|\gamma| \leq 1}): \sum_{|\alpha| \leq 2} \|a_\alpha\|_{C^{\beta}(\Omega; \R)} +  \sum_{|\gamma| \leq 1} \|b_\gamma\|_{C^{1+\beta} (\partial \Omega; \R)} \leq N, \\ \\
& \sum_{|\alpha| = 2}Êa_\alpha(x) \xi^\alpha \geq \nu |\xi|^2 \quad  \forall (x,\xi) \in \overline \Omega \times \R^n, \sum_{\gamma = 1} b_\gamma(x')\nu_\gamma(x') \geq \nu \quad \forall (x',\xi) \in \partial \Omega\}.
\end{array}
$$
Then the constants $C(T_0,A,B)$ and $C(T_0,A,B,\theta)$ appearing in the statement of Lemma \ref{le4.1} can be taken independently of $A$ and $B$ if  $((a_\alpha)_{|\alpha| \leq 2}, (b_\gamma)_{|\gamma| \leq 1}) \in A(N,\nu)$. 
\end{lemma}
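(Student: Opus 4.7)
The argument is by compactness and contradiction, combined with the perturbation result of Lemma~\ref{le4.2} and the explicit estimates of Remark~\ref{re4.3}. The structural point is that $A(N,\nu)$ sits inside a bounded ball of $C^\beta(\Omega)\times C^{1+\beta}(\partial\Omega)$, which by Arzel\`a--Ascoli is relatively compact in the weaker space $C(\Omega)\times C^1(\partial\Omega)$; lower semicontinuity of the H\"older seminorms together with pointwise preservation of ellipticity and transversality then shows that every limit point still lies in $A(N,\nu)$.

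Suppose for contradiction that the constant $C(T_0,A,B)$ of Lemma~\ref{le4.1}(I) cannot be chosen uniformly over $A(N,\nu)$. Then there exist coefficients $((a^{(k)}_\alpha),(b^{(k)}_\gamma))\in A(N,\nu)$, associated operators $A_k,B_k$, and data $(f_k,h_k,u_{0,k})$ satisfying (d) of Theorem~\ref{th2.1}, such that the solutions $u_k$ of (\ref{eq2.2}) fulfill, after normalization,
$$
\|u_k\|_{C^{1+\beta/2,2+\beta}((0,T)\times\Omega)} + \|u_k\|_{C^{\beta/2}((0,T);C^2(\Omega))} + \|u_k\|_{C^{\frac{1+\beta}{2}}((0,T);C^1(\Omega))} = 1,
$$
while the data norms tend to $0$. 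Passing to a subsequence, Arzel\`a--Ascoli yields $a^{(k)}_\alpha\to a^\infty_\alpha$ in $C(\Omega)$ for $|\alpha|\le 2$ and $b^{(k)}_\gamma\to b^\infty_\gamma$ in $C^1(\partial\Omega)$ for $|\gamma|\le 1$; the limits define operators $A_\infty,B_\infty$ to which Lemma~\ref{le4.2}(I) associates a threshold $\delta_0=\delta_0(A_\infty,B_\infty)$.

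Next, I recast the problem for $u_k$ in the perturbative form (\ref{eq4.4}) with base $(A_\infty,B_\infty)$ and time-independent perturbations $\mathcal A_k:=A_k-A_\infty$, $\mathcal B_k:=B_k-B_\infty$. By Remark~\ref{re4.3}, assumptions (b) and (c) of Lemma~\ref{le4.2} hold with
$$
\delta=\delta_k:=C(n)\Bigl(\sum_{|\alpha|=2}\|a^{(k)}_\alpha-a^\infty_\alpha\|_{C(\Omega)}+\sum_{|\gamma|=1}\|b^{(k)}_\gamma-b^\infty_\gamma\|_{C(\partial\Omega)}\Bigr)\to 0,
$$
and $M=2C(n)N$ independent of $k$; the compatibility condition required in Lemma~\ref{le4.2}(I) reduces to (d) for the original problem and hence is automatic. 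For $k$ so large that $\delta_k\le\delta_0$, Lemma~\ref{le4.2}(II) bounds the left-hand side of the normalization identity by
$$
C(T_0,A_\infty,B_\infty,\delta_0,M)\bigl(\|f_k\|_{C^{\beta/2,\beta}}+\|h_k\|_{C^{\beta/2,1+\beta}}+\|u_{0,k}\|_{C^{2+\beta}(\Omega)}\bigr)\longrightarrow 0,
$$
contradicting that it equals $1$. The $\theta$-dependent constants in (\ref{eq4.2})--(\ref{eq4.3}) are obtained by the same argument using parts (III)--(IV) of Lemma~\ref{le4.2}.

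The main obstacle is the perturbation step itself: one needs $\delta$ in Lemma~\ref{le4.2} to become arbitrarily small. This is feasible precisely because Remark~\ref{re4.3} measures $\delta$ only through the $C^0$-norms of the \emph{top-order} coefficient differences (which vanish along the extracted subsequence), while the parameter $M$ that multiplies lower-order terms need only remain bounded in terms of $N$. Without this separation of scales one would be forced to extract $C^\beta$-convergence of the coefficients, which a bare Arzel\`a--Ascoli argument does not provide.
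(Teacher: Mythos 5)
Your proposal is correct and rests on exactly the same two ingredients as the paper's proof: the perturbation result of Lemma \ref{le4.2} quantified through Remark \ref{re4.3} (with $\delta$ controlled only by the sup-norms of the top-order coefficient differences, while $M$ is bounded via $N$), and Arzel\`a--Ascoli compactness of $A(N,\nu)$ in the $C^0$-topology. The only difference is organizational: the paper covers $A(N,\nu)$ by finitely many $C^0$-balls on each of which the constant is uniform and takes the maximum over the finite subcover, whereas you run the equivalent sequential-compactness argument by contradiction around a single limit operator $(A_\infty,B_\infty)$.
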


{\it Proof } Let $((a^0_\alpha)_{|\alpha| \leq 2}, (b_\gamma^0)_{|\gamma| \leq 1}) \in A(N,\nu)$. Then, by Remark \ref{re4.3}, there exists $d$ in $\R^+$, such that the constants $C(T_0,A,B)$ and $C(T_0,A,B,\theta)$ can be chosen
independently of  $((a_\alpha)_{|\alpha| \leq 2}, (b_\gamma)_{|\gamma| \leq 1}) \in A(N,\nu)$, in case 
$$\sum_{|\alpha| = 2}Ê\|a_\alpha - a_\alpha^0\|_{C(\Omega)} + \sum_{|\gamma| = 1}Ê\|b_\gamma - b_\gamma^0\|_{C(\Omega)} < d. $$
Now we observe that, by the theorem of Ascoli-Arzel$\grave {\rm a}$,  $A(N,\nu)$ is compact in $C(\Omega)^{n^2+n+1}Ê\times C(\partial \Omega)^{n+1}$ (we recall, that, for example, a bounded sequence in 
$C^{\beta/2,1+\beta}((0, T) \times \partial \Omega)$ contains a subsequence uniformly converging to an element of $C^{\beta/2,1+\beta}((0, T) \times \partial \Omega)$). So it can be covered by a finite number of balls of the described type.
The conclusion follows.

$\Box$

\begin{theorem}\label{th4.5}
Consider the system (\ref{eq4.1}), with the conditions (AF1)-(AF3). Then:

(I) if $f \in C^{\beta/2,\beta}((0, T) \times \Omega)$,  $h \in C^{\beta/2,1+\beta}((0, T) \times \partial \Omega)$, $u_0 \in C^{2+\beta}(\Omega)$ and
$$
A(0,x',D_x) u_0(x') + f(0,x') = -B(0,x',D_x) u_0(x') + h(0,x') \quad \forall x' \in \partial \Omega, 
$$
(\ref{eq4.1}) has a unique solution $u$ in $C^{1+\beta/2,2+\beta}((0, T) \times \Omega)$, with $D_t u_{|(0, T) \times \partial \Omega} \in B((0, T); C^{1+\beta}(\partial \Omega))$.

Let $T_0 \in \R^+$ and assume that $0 < T \leq T_0$. Then: 

(II) there exists $C(T_0,N,\nu)$ in $\R^+$, such that
$$
\begin{array}{c}
\|u\|_{C^{1+\beta/2,2+\beta}((0, T) \times \Omega))}Ê+ \|D_tu_{|(0, T) \times \partial \Omega}\|_{B((0, T); C^{1+\beta}(\partial{\Omega}))}Ê+ \|u\|_{C^{\beta/2}((0, T); C^2(\Omega))} + \|u\|_{C^{\frac{1+\beta}{2}}((0, T); C^1(\Omega))}Ê\\ \\
\leq C(T_0,N,\nu) (\|f\|_{C^{\beta/2,\beta}((0, T) \times \Omega)} + \|u_0\|_{C^{2+\beta}(\Omega)}Ê+ \|h\|_{C^{\beta/2,1+\beta}((0, T) \times \partial \Omega)}). 
\end{array}
$$
(III) Suppose that $u_0 = 0$. Then, if $0 \leq \theta \leq 1$, 
\begin{equation}\label{eq4.22}
\|u\|_{C^\theta((0, T); C(\Omega))}Ê\leq C(T_0,N,\nu) T^{1-\theta}Ê(\|f\|_{C^{\beta/2,\beta}((0, T) \times \Omega)} + \|h\|_{C^{\beta/2,1+\beta}((0, T) \times \partial{\Omega})}),
\end{equation}
if $0 \leq \theta \leq 2+\beta$, 
\begin{equation}\label{eq4.23}
\|u\|_{B((0, T); C^\theta(\Omega))}Ê\leq C(T_0,N,\nu, \theta) T^{\frac{2+\beta-\theta}{2+\beta}} (\|f\|_{C^{\beta/2,\beta}((0, T) \times \Omega)} + \|h\|_{C^{\beta/2,1+\beta}((0, T) \times \partial \Omega)}). 
\end{equation}
(IV) Again supposing $u_0 = 0$, we have 
$$
\|u\|_{C^{\beta/2}((0, T); C^1(\Omega))}Ê\leq C(T_0,N,\nu) T^{1/2}Ê(\|f\|_{C^{\beta/2,\beta}((0, T) \times \Omega)} + \|h\|_{C^{\beta/2,1+\beta}((0, T) \times \partial \Omega)}).
$$
\end{theorem}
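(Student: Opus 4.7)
The plan is to reduce Theorem \ref{th4.5} to Lemma \ref{le4.2} by freezing the coefficients at time $0$ and treating the time-variation as a small perturbation on a short interval, then iterating over a finite partition of $[0, T]$. Write
$$a_\alpha(t,x) = a_\alpha(0,x) + r_\alpha(t,x), \qquad b_\gamma(t,x') = b_\gamma(0,x') + \sigma_\gamma(t,x'),$$
and set $A_0(x, D_x) := A(0,x,D_x)$, $B_0(x', D_x) := B(0,x',D_x)$. By (AF1)-(AF3), the pair $(A_0, B_0)$ satisfies (AE1)-(AE3) and lies in the class $A(N, \nu)$ of Lemma \ref{le4.4A}; the operators
$$\mathcal A(t) u(x) := \sum_{|\alpha| \leq 2} r_\alpha(t,x) D_x^\alpha u(x), \quad \mathcal B(t) u(x') := -\sum_{|\gamma| \leq 1} \sigma_\gamma(t,x') D_x^\gamma u(x')$$
rewrite (\ref{eq4.1}) exactly in the form (\ref{eq4.4}) of Lemma \ref{le4.2}, and the compatibility condition (d) of the rewritten system at $t = 0$ coincides with the hypothesis of the theorem.

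The crucial point is that (AF2)-(AF3) and the H\"older continuity in time of the coefficients give, for any $T_1 \in (0, T_0]$,
$$\sum_{|\alpha| = 2} \|r_\alpha\|_{C((0, T_1) \times \Omega)} + \sum_{|\gamma| = 1} \|\sigma_\gamma\|_{C((0, T_1) \times \partial \Omega)} \leq 2 N T_1^{\beta/2}.$$
By Remark \ref{re4.3}, the pair $(\mathcal A, \mathcal B)$ then verifies hypotheses (b)-(c) of Lemma \ref{le4.2} with $\delta = C(n) N T_1^{\beta/2}$ and $M = C(n) N$. Thanks to Lemma \ref{le4.4A}, the threshold $\delta_0$ coming from Lemma \ref{le4.2}(I) can be taken uniform over $(A_0, B_0) \in A(N, \nu)$, so there is $T_1 = T_1(N, \nu, T_0) \in (0, T_0]$ with $\delta \leq \delta_0$ as soon as one works on a subinterval of length at most $T_1$. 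Lemma \ref{le4.2}(I) then yields the unique solution on $[0, T_1 \wedge T]$.

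For $T > T_1$ I would iterate on the subintervals $[k T_1, (k+1) T_1 \wedge T]$, re-freezing the coefficients at each $t = k T_1$. The compatibility condition at the re-start is automatic, since both $D_t u(k T_1, x') = A(k T_1, x', D_x) u(k T_1, x') + f(k T_1, x')$ and $D_t u(k T_1, x') = -B(k T_1, x', D_x) u(k T_1, x') + h(k T_1, x')$ hold on $\partial \Omega$ by continuity of $D_t u$ and by the equations satisfied on the previously constructed piece, so the two sides agree. Since $T_1$ depends only on $N, \nu, T_0$ (this is where Lemma \ref{le4.4A} is essential, because each re-freezing produces a different base operator still in $A(N, \nu)$), the iteration closes after at most $\lceil T_0/T_1 \rceil$ steps, and uniqueness propagates exactly as in Step 4 of the proof of Lemma \ref{le4.2}. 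This proves (I).

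For the estimates, part (II) follows by applying Lemma \ref{le4.2}(II) on each of the finitely many subintervals and summing, with constants depending only on $T_0, N, \nu$. For (III) and (IV), where $u_0 = 0$: when $T \leq T_1$ the bounds are immediate from Lemma \ref{le4.2}(III)-(IV); when $T_1 < T \leq T_0$, the factors $T^{1-\theta}$, $T^{(2+\beta-\theta)/(2+\beta)}$ and $T^{1/2}$ are bounded below by positive constants depending only on $T_0, N, \nu$, so that part (II) with $u_0 = 0$ — which bounds every relevant norm by a multiple of $\|f\| + \|h\|$ — already gives the stated estimate after absorbing those lower bounds into the constant. The main obstacle is precisely the uniformity provided by Lemma \ref{le4.4A}: without it, a single step length $T_1$ valid at every re-freezing would not be available and the iteration could shrink to zero.
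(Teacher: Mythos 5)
Your proof follows essentially the same route as the paper's: freeze the coefficients at the left endpoint of a short subinterval, treat the time increment (controlled by $N t^{\beta/2}$) as a small perturbation via Remark \ref{re4.3} and Lemma \ref{le4.2}, invoke Lemma \ref{le4.4A} to make the step length uniform over the re-frozen base operators, and iterate finitely many times, checking compatibility at each restart. The only blemish is the spurious minus sign in your definition of $\mathcal B(t)$ (matching the form of (\ref{eq4.4}) requires $\mathcal B(t) = B(t,\cdot,D_x) - B(0,\cdot,D_x)$), which is harmless since the hypotheses imposed on $\mathcal B$ are sign-invariant.
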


{\it Proof  }ÊLet $t_0 \in [0, T)$. We consider the system
\begin{equation}\label{eq4.24}
\left\{\begin{array}{c}
D_t v(t,x) - A(t_0+t,x,D_x) v(t,x)  \\ \\ = D_t v(t,x) - A(t_0,x,D_x) v(t,x) - [A(t_0+t,x,D_x) - A(t_0,x,D_x)] v(t,x) + \phi(t,x), \\ \\ t \in (0, \tau), x \in \Omega, \\ \\
D_t v(t,x') + B(t_0+t,x',D_x) v(t,x') \\ \\
= D_t v(t,x') + B(t_0,x',D_x) v(t,x') + [B(t_0+t,x',D_x) - B(t_0,x',D_x)] v(t,x') = \psi(t,x'), \\ \\
 t \in (0, \tau),  x' \in \partial \Omega, \\ \\
v(0, x) = v_0(x), \quad x \in \Omega,
\end{array}
\right. 
\end{equation}
We have, for $|\alpha| = 2$, $t \in [0, T-t_0]$, $x \in \Omega$, $|\gamma| = 1$, $x' \in \partial \Omega$, 
$$
|a_\alpha(t_0+t,x) - a_\alpha(t_0,x)| \leq N t^{\beta/2}, |b_\gamma(t_0+t,x') - b_\gamma(t_0,x')| \leq N t^{\beta/2},
$$
So, by Lemma \ref{le4.2}, Remark  \ref{re4.3} and Lemma \ref{le4.2}, there exists $\tau_0 \in \R^+$, independent of $t_0$ in $[0, T)$, such that, if $0 < \tau \leq \tau_0 \wedge (T-\tau_0)$, $\phi \in C^{\beta/2,\beta}((0, \tau) \times \Omega))$, $\psi \in C^{\beta/2,1+\beta}((0, \tau) \times \partial \Omega)$, $v_0 \in C^{2+\beta}(\Omega)$, 
$$
A(t_0,x',D_x) v_0(x') + \phi(0,x') = - B(t_0,x',D_x) v_0(x') + \psi(0,x') \quad \forall x' \in \partial \Omega,
$$
(\ref{eq4.24}) has a unique solution $v$ in $C^{1+\beta/2,2+\beta}((0, \tau) \times \Omega)$, with $D_t v_{|(0, \tau) \times \partial \Omega} \in B((0, \tau); C^{1+\beta}(\partial \Omega))$. Moreover, 
 there exists $C(\tau_0,N,\nu)$ in $\R^+$, such that
$$
\begin{array}{c}
\|v\|_{C^{1+\beta/2,2+\beta}((0, \tau) \times \Omega))}Ê+ \|D_tv_{|(0, \tau) \times \partial \Omega}\|_{B((0, \tau); C^{1+\beta}(\partial{\Omega}))}Ê+ \|v\|_{C^{\beta/2}((0, \tau); C^2(\Omega))} + \|v\|_{C^{\frac{1+\beta}{2}}((0, \tau); C^1(\Omega))}Ê\\ \\
\leq C(\tau_0,N,\nu) (\|\phi\|_{C^{\beta/2,\beta}((0, \tau) \times \Omega)} + \|v_0\|_{C^{2+\beta}(\Omega)}Ê+ \|\psi\|_{C^{\beta/2,1+\beta}((0, \tau) \times \partial \Omega)}). 
\end{array}
$$
So we can begin by constructing a solution $v$ in $(0, \tau_0 \wedge T) \times \Omega$. In case $\tau_0 < T$, we can consider (\ref{eq4.24}) with $t_0 = \tau_0$, $\phi(t,x) = f(\tau+t,x)$, $\psi(t,x') = h(\tau+t,x')$, $v_0 = u(\tau_0,\cdot)$.
In a finite number of steps we get (I) and (II). (III) and (IV) can be obtained as in the proof of Lemma \ref{le4.1}.

$\Box$

\section{Quasilinear problems}

\setcounter{equation}{0}

We are going to discuss system (\ref{eq5.1}).

We have:

\begin{lemma}\label{le5.1}
Let $\Omega$ be an open bounded subset of $\R^n$ and let $a : [0, T] \times \overline \Omega \times \R^{n+1} \to \R$. We assume the following:

(a) $\forall (t,x) \in [0, T]Ê\times \overline \Omega$ $a(t,x,\cdot,\cdot) \in \k^1(\R^{n+1})$; 

(b) for some $\beta \in (0, 1)$,  $\forall R \in \R^+$ there exists $C(R) \in \R^+$ such that, $\forall s, t \in [0, T]$, $\forall x, y \in \overline \Omega$, $\forall u,v \in \R$, $\forall p, q \in \R^n$, 
$$
\begin{array}{c}
|a(t,x,u,p) - a(s,y,v,q)| + |\nabla_{u,p} a(t,x,u,p) - \nabla_{u,p}a(s,y,v,q)| \\ \\
 \leq C(R)(|t-s|^{\beta/2} + |x-y|^\beta + |u-v| + |p-q|)
\end{array}
$$
whenever $|u| + |v| + |p| + |q| \leq R$. For $\tau \in (0, T]$ and $u: [0, \tau] \times \overline \Omega \to \R$, define 
$$
A(u)(t,x):= a(t,x,u(t,x),\nabla_x u(t,x))
$$
whenever the second term has a meaning. Then:

(I) if $u \in C^{\beta/2}((0, \tau), C^1(\Omega)) \cap B((0, \tau); C^{1+\beta}(\Omega))$, $R \in \R^+$  and 
$$\max\{\|u\|_{C^{\beta/2}((0, \tau), C^1(\Omega))}, \|u\|_{B((0, \tau), C^{1+\beta}(\Omega))}\} \leq R, $$
$A(u) \in C^{\beta/2,\beta}((0, \tau) \times \Omega)$ and $\|A(u)\|_{C^{\beta/2,\beta}((0, \tau) \times \Omega)} \leq C_1(R)$; 

(II) if $u,v \in C^{\beta/2}((0, \tau), C^1(\Omega)) \cap B((0, \tau); C^{1+\beta}(\Omega))$  and 
$$\max\{\|u\|_{C^{\beta/2}((0, \tau), C^1(\Omega))}, \|u\|_{B((0, \tau), C^{1+\beta}(\Omega))}, \|v\|_{C^{\beta/2}((0, \tau), C^1(\Omega))}, \|v\|_{B((0, \tau), C^{1+\beta}(\Omega))}\} \leq R, $$
then
$$\|A(u) - A(v)\|_{C^{\beta/2,\beta}((0, \tau) \times \Omega)} \leq C_2(R) (\|u-v\|_{C^{\beta/2}((0, \tau), C^1(\Omega))} + \|u-v\|_{B((0, \tau), C^{1+\beta}(\Omega))}).$$
\end{lemma}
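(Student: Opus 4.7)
The plan is to reduce both assertions to hypothesis (b) via systematic add-and-subtract decompositions, working directly from the definition $C^{\beta/2,\beta}((0,\tau)\times\Omega) = C^{\beta/2}((0,\tau);C(\Omega))\cap C^\beta(\Omega;C((0,\tau)))$ in (\ref{eq1.6}). All constants $C(R)$ below come from (b), applied with a radius commensurate with $R$ that bounds all values of $u,\nabla u$ (and, in (II), also $v,\nabla v$ and the convex combinations $v+\sigma w,\nabla v+\sigma\nabla w$).

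For (I), the temporal seminorm is handled by fixing $x\in\Omega$ and splitting
$$A(u)(t,x) - A(u)(s,x) = \bigl[a(t,x,u(t,x),\nabla u(t,x)) - a(s,x,u(t,x),\nabla u(t,x))\bigr] + \bigl[a(s,x,u(t,x),\nabla u(t,x)) - a(s,x,u(s,x),\nabla u(s,x))\bigr].$$
The first bracket is $\leq C(R)|t-s|^{\beta/2}$ by (b), and the second is $\leq C(R)(|u(t,x)-u(s,x)|+|\nabla u(t,x)-\nabla u(s,x)|) \leq C(R)R|t-s|^{\beta/2}$ using $u\in C^{\beta/2}((0,\tau);C^1(\Omega))$. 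The spatial seminorm is handled by the same decomposition with $(x,y)$ replacing $(s,t)$, invoking the spatial H\"older part of (b) together with the bound $|u(t,x)-u(t,y)|+|\nabla u(t,x)-\nabla u(t,y)| \leq C(R)|x-y|^\beta$ available from $u\in B((0,\tau);C^{1+\beta}(\Omega))$ and the boundedness of $\Omega$. Combining the two estimates yields (I).

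For (II), I would set $w:=u-v$ and use the fundamental theorem of calculus along the segment $(V_\sigma,P_\sigma):=(v+\sigma w,\nabla v+\sigma\nabla w)$, $\sigma\in[0,1]$:
$$A(u)(t,x) - A(v)(t,x) = \int_0^1 \bigl[(\partial_u a)(t,x,V_\sigma,P_\sigma)\,w(t,x) + (\nabla_p a)(t,x,V_\sigma,P_\sigma)\cdot\nabla w(t,x)\bigr]\,d\sigma.$$
Since (b) endows $\nabla_{u,p}a$ with exactly the same H\"older-in-$(t,x)$ / Lipschitz-in-$(u,p)$ structure it endows on $a$, the argument of (I) applies verbatim to the compositions $(\partial_u a)(\cdot,V_\sigma,P_\sigma)$ and $(\nabla_p a)(\cdot,V_\sigma,P_\sigma)$, giving a uniform bound $C(R)$ on their $C^{\beta/2,\beta}$-norms for $\sigma\in[0,1]$. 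Meanwhile $w$ and each component of $\nabla w$ belong to $C^{\beta/2,\beta}((0,\tau)\times\Omega)$ with norm bounded by $\|w\|_{C^{\beta/2}((0,\tau);C^1(\Omega))}+\|w\|_{B((0,\tau);C^{1+\beta}(\Omega))}$, because the $\beta/2$-H\"older bound in $C(\Omega)$ is implied by that in $C^1(\Omega)$ and the spatial $\beta$-H\"older bound is implied by the $C^{1+\beta}$ one. The elementary algebra inequality $\|fg\|_{C^{\beta/2,\beta}}\leq C\|f\|_{C^{\beta/2,\beta}}\|g\|_{C^{\beta/2,\beta}}$, a consequence of $f(x)g(x)-f(y)g(y)=f(x)[g(x)-g(y)]+g(y)[f(x)-f(y)]$ applied in each of the two variables, then bounds the integrand by $C(R)(\|w\|_{C^{\beta/2}((0,\tau);C^1(\Omega))}+\|w\|_{B((0,\tau);C^{1+\beta}(\Omega))})$; Minkowski's inequality to pass the $C^{\beta/2,\beta}$-norm under the $d\sigma$ integral yields (II).

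The main obstacle is only bookkeeping: confirming uniformity in $\sigma\in[0,1]$ of the estimates for the composed Jacobians, and verifying the algebra and inclusion properties of $C^{\beta/2,\beta}$ to chain the individual bounds together. Both are routine once (b) and the definition of the parabolic H\"older space are in hand.
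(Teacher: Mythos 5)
Your proposal is correct and follows essentially the same route as the paper: part (I) by the standard add-and-subtract decomposition combined with hypothesis (b) and the embeddings $C^{\beta/2}((0,\tau);C^1(\Omega))$, $B((0,\tau);C^{1+\beta}(\Omega))$, and part (II) by representing $A(u)-A(v)$ as the integral of $\nabla_{u,p}a$ along the segment joining $(v,\nabla v)$ to $(u,\nabla u)$ paired with $(u-v,\nabla(u-v))$, then estimating the two factors via the product rule for H\"older seminorms. The paper simply carries out the resulting $I+J$ splitting explicitly rather than invoking the algebra property of $C^{\beta/2,\beta}$ abstractly; the content is the same.
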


\begin{proof} (I) If $(t,x) \in (0, T) \times \Omega$,
$$
\begin{array}{c}
|A(u)(t,x)| \leq |a(t,x,u(t,x),\nabla_xu(t,x)) - a(t,x,0,0)| + |a(t,x,0,0)| \\ \\
 \leq \|a(\cdot,\cdot,0,0)\|_{C((0, T) \times \Omega)} + C(R) (|u(t,x)| + |\nabla_x u(t,x)|) \leq C_3(R). 
\end{array}
$$
Moreover, if $s, t \in (0, \tau)$ and $x \in \Omega$, 
$$
\begin{array}{c}
|a(t,x,u(t,x), \nabla_xu(t,x)) - a(s,x,u(s,x), \nabla_xu(s,x))| \\ \\
\leq C(R)(|t-s|^{\beta/2} + |u(t,x) - u(s,x)| + |\nabla_xu(t,x) - \nabla_xu(s,x)|) \\ \\
 \leq C(R)(|t-s|^{\beta/2} + (t-s)^{\beta/2} \|u\|_{C^{\beta/2}((0, T); C(\Omega))}  + (t-s)^{\beta/2}  \|u\|_{C^{\beta/2}((0, T); C^1(\Omega))}) \\ \\
 \leq C_4(R) |t-s|^{\beta/2}. 
\end{array}
$$
Analogously, one can show that $\|A(u)\|_{B((0, \tau); C^{\beta}(\Omega))} \leq C_5(R)$. 

(II) If $(t,x) \in (0, \tau) \times \Omega$, 
$$
\begin{array}{c}
|A(u)(t,x) - A(v)(t,x)| \leq C(R)(|u(t,x) - v(t,x)| + |\nabla_x u(t,x) - \nabla_x v(t,x)|) \\ \\
 \leq 2C(R) \|u - v\|_{C((0, \tau); C^1(\Omega))} \leq 2C(R) \|u - v\|_{C^{\beta/2}((0, \tau); C^1(\Omega))}. 
\end{array}
$$
If $t,s \in (0, \tau)$ and $x  \in \Omega$, 
$$
\begin{array}{c}
|(A(u)(t,x) - A(v)(t,x)) - (A(u)(s,x) - A(v)(s,x))| \\ \\
= |\int_0^1 \nabla_{u,p} a(t,x, v(t,x) + r (u(t,x) - v(t,x)), \nabla_xv(t,x) + r ( \nabla_xu(t,x) -  \nabla_xv(t,x))) dr \\ \\
\cdot (u(t,x) - v(t,x), \nabla_xu(t,x) -  \nabla_xv(t,x)) \\ \\
- \int_0^1 \nabla_{u,p} a(s,x, v(s,x) + r (u(s,x) - v(s,x)), \nabla_xv(s,x) + r ( \nabla_xu(s,x) -  \nabla_xv(s,x))) dr \\ \\
\cdot (u(s,x) - v(s,x), \nabla_xu(s,x) -  \nabla_xv(s,x))| \\ \\
\leq |\int_0^1 \nabla_{u,p} a(t,x, v(t,x) + r (u(t,x) - v(t,x)), \nabla_xv(t,x) + r ( \nabla_xu(t,x) -  \nabla_xv(t,x)) dr| \\ \\
\times |(u(t,x) - v(t,x) - (u(s,x) - v(s,x)), \nabla_xu(t,x) -  \nabla_xv(t,x) - (\nabla_xu(s,x) - \nabla_xv(s,x)))| \\ \\
+  \int_0^1 |\nabla_{u,p} a(t,x, v(t,x) + r (u(t,x) - v(t,x)), \nabla_xv(t,x) + r ( \nabla_xu(t,x) -  \nabla_xv(t,x))  \\ \\
- \nabla_{u,p} a(s,x, v(s,x) + r (u(s,x) - v(s,x)), \nabla_xv(s,x) + r ( \nabla_xu(s,x) -  \nabla_xv(s,x)) | dr \\ \\
\times |(u(s,x) - v(s,x), \nabla_xu(s,x) -  \nabla_xv(s,x))| = I + J
\end{array}
$$
and 
$$
\begin{array}{c}
I \leq C_5(R)(|u(t,x) - v(t,x) - (u(s,x) - v(s,x))| \\ \\
+ |\nabla_xu(t,x) -  \nabla_xv(t,x) - (\nabla_xu(s,x) - \nabla_xv(s,x))|) \\ \\
\leq 2C_5(R) |t-s|^{\beta/2}Ê\|u-v\|_{C^{\beta/2}((0, \tau); C^1(\Omega))},
\end{array}
$$
$$
\begin{array}{c}
J \leq C_6(R) (|t-s|^{\beta/2} + |u(t,x) - u(s,x)| + |v(t,x) - v(s,x)| \\ \\
+ |\nabla_xu(t,x) - \nabla_xu(s,x)| + |\nabla_x v(t,x) - \nabla_x v(s,x)|) \\ \\
\times (|u(s,x) - v(s,x)| + |\nabla_xu(s,x) -  \nabla_xv(s,x)|) \\ \\
\leq C_7(R) |t-s|^{\beta/2} (1 + \|u\|_{C^{\beta/2}((0, \tau); C^1(\Omega))} +  \|v\|_{C^{\beta/2}((0, \tau); C^1(\Omega))})\|u-v\|_{C((0, \tau); C^1(\Omega))}. 
\end{array}
$$
So 
$$[A(u) - A(v)]_{C^{\beta/2}((0, \tau); C(\Omega))} \leq C_8(R) \|u-v\|_{C^{\beta/2}((0, \tau); C^1(\Omega))}. $$
Analogously, one can show that
$$\|A(u) - A(v)\|_{B((0, \tau); C^\beta(\Omega))} \leq C_9(R) \|u-v\|_{B((0, \tau); C^{1+\beta}(\Omega))}. $$
\end{proof}

\begin{lemma}\label{le5.2}
Let $\Omega$ be an open bounded subset of $\R^n$, $\beta \in (0, 1)$) and let $b : [0, T] \times \overline \Omega \times \R \to \R$. We assume the following: 

(a) $b$ is continuous together with its partial derivatives $D_{x_j} b$, $D_ub$, $D^2_{x_ju} b$, $D^2_{u} b$ ($1 \leq j \leq n$); 

(b) $\forall R \in \R^+$ there exists $C(R) \in \R^+$ such that, $\forall t,s \in [0, T]$, $\forall x \in \overline \Omega$, $\forall u,v \in \R$ with $|u| + |v| \leq R$, 
$$
|b(t,x,u) - b(s,x,v)| + |D_ub(t,x,u) - D_ub(s,x,v)| \leq C(R) (|t-s|^{\beta/2} + |u-v|). 
$$
(c) $\forall R \in \R^+$ there exists $C(R) \in \R^+$ such that, $\forall t \in [0, T]$, $\forall x,y \in \overline \Omega$, $\forall u,v \in \R$ with $|u| + |v| \leq R$, 
$$
|\nabla_{x,u} b(t,x,u) - \nabla_{x,u} b(t,y,v)| + |D_u\nabla_{x,u} b(t,x,u) - D_u\nabla_{x,u} b(t,y,v)| \leq C(R) (|x-y|^{\beta} + |u-v|). 
$$
For $\tau \in (0, T]$ and $u: [0, \tau] \times \overline \Omega \to \R$, define 
$$
B(u)(t,x):= b(t,x,u(t,x)).
$$
Then:

(I) if $u \in C^{\beta/2,1+\beta}((0, \tau) \times \Omega))$, $R \in \R^+$  and 
$$\|u\|_{C^{\beta/2,1+\beta}((0, \tau) \times \Omega)} \leq R, $$
$B(u) \in C^{\beta/2,1+\beta}((0, \tau) \times \Omega)$ and $\|B(u)\|_{C^{\beta/2,1+\beta}((0, \tau) \times \Omega)} \leq C_1(R)$; 

(II) if $u,v \in C^{\beta/2,1+\beta}((0, \tau) \times \Omega)$  and 
$$\max\{\|u\|_{C^{\beta/2,1+\beta}((0, \tau) \times \Omega)}, \|v\|_{C^{\beta/2,1+\beta}((0, \tau) \times \Omega)}\} \leq R, $$
then
$$\|B(u) - B(v)\|_{C^{\beta/2,1+\beta}((0, \tau) \times \Omega)} \leq C_2(R) \|u-v\|_{C^{\beta/2,1+\beta}((0, \tau) \times \Omega))}.$$
\end{lemma}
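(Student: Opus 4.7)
The argument runs parallel to that of Lemma \ref{le5.1}, exploiting the characterization in Lemma \ref{le2.7B}(II) to split the norm on $C^{\beta/2,1+\beta}((0,\tau)\times\Omega)$ into a temporal part, $\|\cdot\|_{C^{\beta/2}((0,\tau);C(\Omega))}$, and a spatial part, $\|\cdot\|_{B((0,\tau);C^{1+\beta}(\Omega))}$.

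For (I), first handle the temporal Hölder norm. Writing
$$b(t,x,u(t,x))-b(s,x,u(s,x))=\big[b(t,x,u(t,x))-b(s,x,u(t,x))\big]+\big[b(s,x,u(t,x))-b(s,x,u(s,x))\big],$$
assumption (b) controls the first bracket by $C(R)|t-s|^{\beta/2}$ and the second by $C(R)|u(t,x)-u(s,x)|\leq C(R) R|t-s|^{\beta/2}$ thanks to $u\in C^{\beta/2}((0,\tau);C(\Omega))$. For the spatial $C^{1+\beta}$ bound, apply the chain rule fiberwise in $t$:
$$D_{x_j}B(u)(t,x)=(D_{x_j}b)(t,x,u(t,x))+(D_ub)(t,x,u(t,x))\,D_{x_j}u(t,x).$$
Using (c), the map $x\mapsto(D_{x_j}b)(t,x,u(t,x))$ satisfies a $C^\beta$ estimate because $|D_{x_j}b(t,x,u(t,x))-D_{x_j}b(t,y,u(t,y))|\leq C(R)(|x-y|^\beta+|u(t,x)-u(t,y)|)$ and $u(t,\cdot)\in C^1(\Omega)$ is bounded. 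The same reasoning gives $(D_ub)(t,\cdot,u(t,\cdot))\in C^\beta(\Omega)$ with norm controlled by $R$; multiplying by $D_{x_j}u(t,\cdot)\in C^\beta(\Omega)$ and invoking the Banach-algebra structure of $C^\beta(\Omega)$ yields the desired bound on $D_{x_j}B(u)$ in $B((0,\tau);C^\beta(\Omega))$.

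For (II), use the fundamental-theorem-of-calculus identity
$$B(u)(t,x)-B(v)(t,x)=\int_0^1 D_ub\big(t,x,v(t,x)+r(u(t,x)-v(t,x))\big)\,dr\cdot(u(t,x)-v(t,x)),$$
set $w_r(t,x):=v(t,x)+r(u(t,x)-v(t,x))$, and expand $D_{x_j}[B(u)-B(v)]$ by the chain rule as a finite sum of products. Each product has one factor of the form $\int_0^1 (\partial b)(t,x,w_r(t,x))\,dr$ or $(\partial b)(t,x,u)-(\partial b)(t,x,v)$, where $\partial$ stands for one of $D_{x_j}$, $D_u$, $D_{x_ju}^2$, $D_u^2$, and another factor built from $u-v$, $D_{x_j}u-D_{x_j}v$, or similar. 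By the same decompositions as in (I) (applied to $\partial b$ rather than $b$), each such factor lies in $B((0,\tau);C^\beta(\Omega))\cap C^{\beta/2}((0,\tau);C(\Omega))$ with norms bounded in terms of $R$; the differences in $u-v$ contribute the required linear factor in $\|u-v\|_{C^{\beta/2,1+\beta}}$. A splitting into $I+J$ analogous to the one in the proof of Lemma \ref{le5.1} handles the temporal Hölder seminorm.

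The main obstacle is the bookkeeping in (II): one must track the products of two factors each of which is simultaneously controlled in $C^{\beta/2}$ (in time) and in $C^\beta$ (in space), and show that Hölder differences in time of such products distribute correctly. This is where the second-order regularity in (a) enters (via the Lipschitz character of $\nabla_{x,u}b$ in $u$ provided by (b) and (c)), allowing the term $J$ to be bounded by $|t-s|^{\beta/2}\|u-v\|_{C^{\beta/2,1+\beta}}$ times a polynomial in the norms of $u$ and $v$. Once the two seminorms are estimated, the statement of (II) follows by summation.
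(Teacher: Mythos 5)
Your proposal is correct and takes essentially the same route as the paper: the paper's own proof simply states that the argument follows the lines of Lemma \ref{le5.1} together with the chain-rule formula $D_{x_j}B(u)(t,x)=D_{x_j}b(t,x,u(t,x))+D_ub(t,x,u(t,x))D_{x_j}u(t,x)$, which is exactly the decomposition you build on. Your write-up is in fact more detailed than the paper's, and the added bookkeeping for part (II) is consistent with what the analogous estimates in Lemma \ref{le5.1} require.
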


\begin{proof} It follows the lines of the proof of Lemma \ref{le5.1}; of course, one should use the elementary formula
$$
D_{x_j}B(u)(t,x) = D_{x_j} b(t,x,u(t,x)) + D_{u} b(t,x,u(t,x)) D_{x_j} u(t,x) \quad (1 \leq j \leq n). 
$$
\end{proof}

Now we are able to study system (\ref{eq5.1}). We introduce the following assumptions: 

\medskip

{\it (AG1) $\beta \in (0, 1)$, $\Omega$ is an open bounded subset of $\R^n$, with boundary $\partial \Omega$ of class $C^{2+\beta}$; 

\medskip

(AG2) $T \in \R^+$ and $\forall \alpha \in \N_0^n$, with $|\alpha| = 2$, $a_\alpha, f : [0, T] \times \overline \Omega \times \R^{n+1} \to \R$;
 $\forall (t,x) \in [0, T]Ê\times \overline \Omega$ $a_\alpha(t,x,\cdot,\cdot), f(t,x,\cdot,\cdot) \in \k^1(\R^{n+1})$; $\forall R \in \R^+$ there exists $C(R) \in \R^+$ such that, $\forall s, t \in [0, T]$, $\forall x, y \in \overline \Omega$, $\forall u,v \in \R$, $\forall p, q \in \R^n$, 
$$
\begin{array}{c}
\sum_{|\alpha| = 2} (|a_\alpha(t,x,u,p) - a_\alpha(s,y,v,q)| + |\nabla_{u,p} a_\alpha(t,x,u,p) - \nabla_{u,p}a_\alpha(s,y,v,q)|) \\ \\
+ |f(t,x,u,p) - f(s,y,v,q)| + |\nabla_{u,p} f(t,x,u,p) - \nabla_{u,p} f(s,y,v,q)| \\ \\
 \leq C(R)(|t-s|^{\beta/2} + |x-y|^\beta + |u-v| + |p-q|). 
\end{array}
$$

\medskip

(AG3) $\forall j  \in \{1, \dots, n\}$ $b_j, h : [0, T] \times \overline \Omega \times \R \to \R$; they are continuous, together with their  derivatives $D_{x_i} b_j$, $D_ub_j$, $D^2_{x_iu} b_j$, $D^2_{u} b_j$,
$D_{x_i} h$, $D_uh$, $D^2_{x_iu} h$, $D^2_{u} h$ ($1 \leq i \leq n$); $\forall R \in \R^+$ there exists $C(R) \in \R^+$ such that, $\forall t,s \in [0, T]$, $\forall x \in \overline \Omega$, $\forall u, v \in \R$ with 
$|u| + |v| \leq R$, 
$$
\begin{array}{c}
\sum_{j=1}^n (|b_j(t,x,u) - b_j(s,x,v)|  \\ \\
+ |D_u b_j(t,x,u) - D_ub_j(s,x,v)|) + |h(t,x,u) - h(s,x,v)| + |D_u h(t,x,u) - D_uh(s,x,v)| \\ \\
\leq C(R)(|t - s|^{\beta/2} + |u-v|); 
\end{array}
$$
 $\forall R \in \R^+$ there exists $C(R) \in \R^+$ such that, $\forall t \in [0, T]$, $\forall x,y \in \overline \Omega$, $\forall u,v \in \R$ with $|u| + |v| \leq R$, 
$$
\begin{array}{c}
\sum_{j=1}^n (|\nabla_{x,u} b_j(t,x,u) - \nabla_{x,u} b_j(t,y,v)| + |D_u\nabla_{x,u} b_j(t,x,u) - D_u\nabla_{x,u} b_j(t,y,v)|) \\ \\
 + |\nabla_{x,u} h(t,x,u) - \nabla_{x,u} h(t,y,v)| + |D_u\nabla_{x,u} h(t,x,u) - D_u\nabla_{x,u} h(t,y,v)| \leq C(R) (|x-y|^{\beta} + |u-v|). 
\end{array}
$$

\medskip

(AG4) $\forall (t,x,u,p) \in [0, T] \times \overline \Omega \times \R^{n+1}$ there exists $\nu(t,x,u,p) \in \R^+$ such that
$$\sum_{|\alpha| = 2} a_\alpha(t,x,u,p) \xi^\alpha \geq \nu(t,x,u,p) |\xi|^2 \quad \forall \xi \in \R^n;$$
$\forall (t,x',u) \in [0, T] \times \partial \Omega \times \R$ there exists $\nu(t,x',u) \in \R^+$ such that 
$$\sum_{j=1}^n b_j(t,x',u) \nu_j(x')  \geq \nu(t,x',u).$$
}

\medskip

\begin{lemma}
Assume that (AG1)-(AG4) hold. Let $u_0 \in C^{2+\beta}(\Omega)$ be such that 
$$
\begin{array}{c}
\sum_{|\alpha| = 2} a_\alpha(0,x', u_0(x'), \nabla_x u_0(x')) D_x^\alpha u_0(x') + f(0,x', u_0(x'), \nabla_x u_0(x')) \\ \\
=  -\sum_{j=1}^n b_j(0,x', u_0(x')) D_{x_j} u_0(x') + h(0,x', u_0(x')), \quad \forall x' \in \partial \Omega. 
\end{array}
$$
Consider the system
\begin{equation}\label{eq5.2}
\left\{\begin{array}{ll}
D_t U_0(t,x) = \sum_{|\alpha| = 2}Êa_\alpha(t,x, u_0(x), \nabla_x u_0(x)) D_x^\alpha U_0(t,x) + f(t,x, u_0(x), \nabla_x u_0(x)), & t \in (0, T), x \in \Omega, \\ \\
D_t U_0(t,x') + \sum_{j=1}^nÊb_j(t,x', u_0(x')) D_{x_j} U_0(t,x') = h(t,x', u_0(x')), & t \in (0, T),  x' \in \partial \Omega, \\ \\
U_0(0,x) = u_0(x), & x \in \Omega. 
\end{array}
\right.
\end{equation}
Then (\ref{eq5.2}) has a unique solution $U_0$ belonging to $C^{1+\beta/2,2+\beta}((0, T) \times \Omega)$, with $D_t U_{0|(0, T) \times \partial \Omega} \in C^{\beta/2,1+\beta}$ $((0, T) \times \partial \Omega)$.

\end{lemma}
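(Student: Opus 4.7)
The strategy is to freeze the nonlinear dependence by substituting $u_0$ into every coefficient, reducing (\ref{eq5.2}) to a linear non-autonomous problem of the form (\ref{eq4.1}), and then invoke Theorem \ref{th4.5}. Set, for $|\alpha|=2$ and $1\le j\le n$,
\begin{equation*}
a_\alpha^0(t,x):=a_\alpha(t,x,u_0(x),\nabla_x u_0(x)),\quad f^0(t,x):=f(t,x,u_0(x),\nabla_x u_0(x)),
\end{equation*}
\begin{equation*}
b_j^0(t,x'):=b_j(t,x',u_0(x')),\quad h^0(t,x'):=h(t,x',u_0(x')),
\end{equation*}
and define $A^0(t,x,D_x):=\sum_{|\alpha|=2}a_\alpha^0(t,x)D_x^\alpha$, $B^0(t,x',D_x):=\sum_{j=1}^n b_j^0(t,x')D_{x_j}$.

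First I verify that $A^0$, $B^0$, $f^0$, $h^0$ satisfy (AF1)--(AF3). Viewing $(t,x)\mapsto u_0(x)$ as a time-independent map, it lies in $C^{\beta/2}((0,T);C^1(\Omega))\cap B((0,T);C^{1+\beta}(\Omega))$, since $u_0\in C^{2+\beta}(\Omega)$ and the temporal H\"older seminorm vanishes. Lemma \ref{le5.1}(I), applied to each $a_\alpha$ ($|\alpha|=2$) and to $f$ under hypothesis (AG2), yields $a_\alpha^0, f^0\in C^{\beta/2,\beta}((0,T)\times\Omega)$. Similarly, the trace $u_0{|_{\partial\Omega}}\in C^{1+\beta}(\partial\Omega)$, viewed as time-independent, belongs trivially to $C^{\beta/2,1+\beta}((0,T)\times\partial\Omega)$, and the boundary analogue of Lemma \ref{le5.2}(I) under (AG3) gives $b_j^0, h^0\in C^{\beta/2,1+\beta}((0,T)\times\partial\Omega)$. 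Uniform strong ellipticity $\sum_{|\alpha|=2}a_\alpha^0(t,x)\eta^\alpha\ge\nu_0|\eta|^2$ and uniform positivity $\sum_{j} b_j^0(t,x')\nu_j(x')\ge\nu_1>0$ follow from (AG4) together with the compactness of $[0,T]\times\overline\Omega$, respectively $[0,T]\times\partial\Omega$, applied to the continuous functions $(t,x)\mapsto \inf_{|\xi|=1}\sum_{|\alpha|=2}a_\alpha(t,x,u_0(x),\nabla u_0(x))\xi^\alpha$ and $(t,x')\mapsto\sum_j b_j(t,x',u_0(x'))\nu_j(x')$.

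Next, the compatibility condition required by Theorem \ref{th4.5}(I) for the frozen system reads
\begin{equation*}
A^0(0,x',D_x)u_0(x')+f^0(0,x')=-B^0(0,x',D_x)u_0(x')+h^0(0,x'),\qquad\forall x'\in\partial\Omega,
\end{equation*}
which after unpacking the definitions is exactly the hypothesis imposed on $u_0$ in the statement. Theorem \ref{th4.5}(I) therefore produces a unique $U_0\in C^{1+\beta/2,2+\beta}((0,T)\times\Omega)$ with $D_tU_0{|_{(0,T)\times\partial\Omega}}\in B((0,T);C^{1+\beta}(\partial\Omega))$, solving (\ref{eq5.2}).

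Finally I upgrade $D_tU_0{|_{(0,T)\times\partial\Omega}}$ from $B((0,T);C^{1+\beta}(\partial\Omega))$ to $C^{\beta/2,1+\beta}((0,T)\times\partial\Omega)$ by reading off the boundary equation
\begin{equation*}
D_tU_0(t,x')=-\sum_{j=1}^n b_j^0(t,x')D_{x_j}U_0(t,x')+h^0(t,x').
\end{equation*}
Lemma \ref{le2.7B}(V) gives $D_{x_j}U_0\in C^{(1+\beta)/2}((0,T);C(\Omega))\cap B((0,T);C^{1+\beta}(\Omega))$, whose trace to $\partial\Omega$ lies in $C^{\beta/2,1+\beta}((0,T)\times\partial\Omega)$; since $b_j^0, h^0$ are in the same space and bounded H\"older classes are closed under products, the right-hand side belongs there too, and so does $D_tU_0{|_{\partial\Omega}}$. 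The main obstacle is the bookkeeping in the first step, namely verifying the H\"older regularity of the frozen coefficients via Lemmas \ref{le5.1}--\ref{le5.2}; once this is in place, the remainder is a direct application of Theorem \ref{th4.5} combined with reading regularity off the boundary identity.
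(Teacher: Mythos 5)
Your proof is correct and follows exactly the route the paper intends: the paper in fact states this lemma without proof, leaving implicit precisely the argument you give (freeze the coefficients at $u_0$, use Lemmata \ref{le5.1}--\ref{le5.2} and compactness to verify (AF1)--(AF3) and the compatibility condition, then apply Theorem \ref{th4.5}(I)). Your final step, upgrading $D_tU_{0|(0,T)\times\partial\Omega}$ from $B((0,T);C^{1+\beta}(\partial\Omega))$ to $C^{\beta/2,1+\beta}((0,T)\times\partial\Omega)$ by reading the boundary equation together with Lemma \ref{le2.7B}(V), is a genuine part of the claimed conclusion that the paper silently skips, and you handle it correctly.
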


\begin{proposition}\label{pr5.4}
Consider the system (\ref{eq5.1}), with the assumptions $(AG1)-(AG3)$. Let $u_0 \in C^{2+\beta}(\Omega)$ be such that 
$$
\begin{array}{c}
\sum_{|\alpha| = 2}Êa_\alpha(0,x', u_0(x'), \nabla_x u_0(x')) D_x^\alpha u_0(x') + f(0,x', u_0(x')) \\ \\
= -\sum_{j=1}^nÊb_j(0,x', u_0(x')) D_{x_j} u_0(x') + h(0,x', u_0(x')), \quad \forall x' \in \partial \Omega. 
\end{array}
$$
Let $R \in \R^+$. Then there exists $\tau(R) \in (0, T]$ such that, if $0 < \tau \leq \tau(R)$, (\ref{eq5.1}) has a unique solution $u$ in $C^{1+\beta/2, 2+\beta}((0, \tau) \times \Omega)$, with $D_tu_{|(0, \tau) \times \partial \Omega} \in B((0, \tau); C^{1+\beta}(\partial \Omega))$, satisfying
$$
\|u - U_0\|_{C^{\beta/2}((0, \tau); C^1(\Omega))}Ê+ \|u - U_0\|_{B((0, \tau); C^{1+\beta}(\Omega))} \leq R. 
$$

\end{proposition}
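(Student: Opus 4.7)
The argument will be a contraction-mapping scheme with $U_0$ (defined by (\ref{eq5.2})) playing the role of the reference function. For $\tau \in (0, T]$ to be chosen, set
$$X_\tau := C^{\beta/2}((0,\tau); C^1(\Omega)) \cap B((0,\tau); C^{1+\beta}(\Omega))$$
with its natural norm, and consider the closed subset
$$M_{R,\tau} := \{v \in X_\tau : v(0,\cdot) = u_0,\ \|v - U_0\|_{X_\tau} \leq R\}.$$
To each $v \in M_{R,\tau}$, freeze the coefficients at $v$ and define $\Phi(v)$ as the unique solution, supplied by Theorem \ref{th4.5}, of the linear nonautonomous problem with
$$A_v(t,x,D_x) u := \sum_{|\alpha|=2} a_\alpha(t,x,v(t,x),\nabla_x v(t,x)) D_x^\alpha u,\quad F_v(t,x) := f(t,x,v(t,x),\nabla_x v(t,x)),$$
$$B_v(t,x',D_x) u := \sum_{j=1}^n b_j(t,x',v(t,x')) D_{x_j} u,\quad H_v(t,x') := h(t,x',v(t,x')),$$
and initial datum $u_0$. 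The hypotheses (AF1)-(AF3) of Theorem \ref{th4.5} are satisfied uniformly over $M_{R,\tau}$: the required Hölder regularity of $A_v, F_v, B_v, H_v$ comes from Lemmas \ref{le5.1} and \ref{le5.2} (the latter in its obvious boundary variant) applied with $R + \|U_0\|_{X_\tau}$ as the radius; ellipticity and obliqueness persist by (AG4) since $(v, \nabla v)$ stays in a bounded subset of $\R^{n+1}$; and the compatibility condition at $t=0$ reduces exactly to the one assumed on $u_0$, since $v(0,\cdot)=u_0$.

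\textbf{Self-mapping.} Let $w := \Phi(v) - U_0$; then $w(0,\cdot)=0$ and $w$ satisfies a system of the form (\ref{eq4.1}) with operators $A_v, B_v$ and right-hand sides
$$\phi := (A_v - A_{U_0})U_0 + (F_v - F_{U_0}),\quad \psi := -(B_v - B_{U_0})U_0 + (H_v - H_{U_0}),$$
where the subscript $U_0$ denotes the frozen-at-$u_0$ operators of (\ref{eq5.2}). Since $U_0$ is a fixed $C^{1+\beta/2,2+\beta}$ function, the Lipschitz part (II) of Lemmas \ref{le5.1} and \ref{le5.2} yields
$$\|\phi\|_{C^{\beta/2,\beta}((0,\tau)\times\Omega)} + \|\psi\|_{C^{\beta/2,1+\beta}((0,\tau)\times\partial\Omega)} \leq C_1(R,U_0)\|v - U_0\|_{X_\tau} \leq C_1(R,U_0) R.$$
Because the nonautonomous family $\{(A_v, B_v)\}_{v \in M_{R,\tau}}$ shares common constants $N,\nu$ in (AF2)-(AF3), Theorem \ref{th4.5}(III)-(IV) applies with a single $C_2(T,N,\nu)$ and delivers
$$\|w\|_{X_\tau} \leq C_2(T,N,\nu)\bigl(\tau^{\beta/(2+\beta)} + \tau^{1/2}\bigr) C_1(R,U_0),$$
which is $\leq R$ once $\tau$ is small enough. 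Hence $\Phi(M_{R,\tau}) \subseteq M_{R,\tau}$.

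\textbf{Contraction and conclusion.} For $v_1, v_2 \in M_{R,\tau}$, put $w := \Phi(v_1) - \Phi(v_2)$; again $w(0,\cdot)=0$ and $w$ solves the linear problem with coefficients $A_{v_1}, B_{v_1}$ and right-hand sides
$$(A_{v_1} - A_{v_2})\Phi(v_2) + F_{v_1} - F_{v_2},\qquad -(B_{v_1} - B_{v_2})\Phi(v_2) + H_{v_1} - H_{v_2}.$$
Theorem \ref{th4.5}(II) applied to $\Phi(v_i)$ gives a uniform bound on $\|\Phi(v_i)\|_{C^{1+\beta/2,2+\beta}}$ in terms of $R$ and $u_0$, so Lemmas \ref{le5.1}(II) and \ref{le5.2}(II) control both right-hand sides by $C_3(R,u_0)\|v_1 - v_2\|_{X_\tau}$, and Theorem \ref{th4.5}(III)-(IV) yields
$$\|w\|_{X_\tau} \leq C_4(R,u_0,T,N,\nu)\bigl(\tau^{\beta/(2+\beta)} + \tau^{1/2}\bigr)\|v_1 - v_2\|_{X_\tau}.$$
A final shrinking of $\tau = \tau(R)$ turns $\Phi$ into a strict contraction on the complete metric space $(M_{R,\tau},\|\cdot\|_{X_\tau})$. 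Banach's fixed-point theorem produces the unique $u \in M_{R,\tau}$ with $u = \Phi(u)$, which solves (\ref{eq5.1}); its $C^{1+\beta/2,2+\beta}$-regularity and the boundary bound are automatic because $u$ lies in the image of $\Phi$.

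\textbf{Main obstacle.} The delicate point is uniformity: one needs Theorem \ref{th4.5}'s constant $C(T,N,\nu)$ to be independent of $v \in M_{R,\tau}$, which is exactly the content of Lemma \ref{le4.4A} applied to the frozen coefficients. This in turn rests on confining $(v, \nabla v)$ to a fixed compact subset of $\R^{n+1}$ via the bound $\|v\|_{X_\tau} \leq R + \|U_0\|_{X_\tau}$ and on preserving a common $\nu$ by continuity from (AG4). The two free parameters must be coupled in the correct order: $R$ is chosen first to pin down the class of coefficients (and hence $N, \nu$), and $\tau(R)$ second as the minimum of the two thresholds from the self-mapping and contraction estimates, each requiring the $\tau$-power in Theorem \ref{th4.5}(III)-(IV) to dominate $C_1(R,U_0)$ or $C_3(R,u_0)$ respectively.
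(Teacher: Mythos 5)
Your proposal is correct and follows essentially the same route as the paper: the same complete metric space of perturbations of $U_0$, the same map obtained by freezing the coefficients at $v$ and solving the linear problem via Theorem \ref{th4.5} (with uniformity over the ball coming from Lemma \ref{le4.4A} and Lemmata \ref{le5.1}--\ref{le5.2}), and the same use of the positive powers of $\tau$ from Theorem \ref{th4.5}(III)--(IV) to get the self-mapping and contraction properties. The only cosmetic difference is that in the difference equations you keep the operator $A_v$ (resp.\ $A_{v_1}$) on the left and move $(A_v - A_{u_0})U_0$ to the right, whereas the paper keeps the operator frozen at $u_0$ (resp.\ at $V$) and moves $[A_U - A_{u_0}]u$ to the right; both decompositions work identically.
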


\begin{proof} We consider, for $\tau \in (0, T]$, $R \in \R$, the set 
$$
\begin{array}{ll}
\mM(\tau,R) := &  \{U \in C^{\beta/2}((0, \tau); C^1(\Omega)) \cap B((0, \tau); C^{1+\beta}(\Omega)): U(0,\cdot) = u_0, \\ \\
& \|u - U_0\|_{C^{\beta/2}((0, \tau); C^1(\Omega))}Ê+ \|u - U_0\|_{B((0, \tau); C^{1+\beta}(\Omega))} \leq R\}
\end{array}
$$
which is a complete metric space with the distance
$$
d(U,V):=  \|U - V\|_{C^{\beta/2}((0, \tau); C^1(\Omega))}Ê+ \|U - V\|_{B((0, \tau); C^{1+\beta}(\Omega))}. 
$$
If $U \in \mM(\tau,R)$, we consider the system
\begin{equation}\label{eq5.3}
\left\{\begin{array}{ll}
D_t u(t,x) = \sum_{|\alpha| = 2}Êa_\alpha(t,x, U(t,x), \nabla_x U(t,x)) D_x^\alpha u(t,x) + f(t,x, U(t,x), \nabla_x U(t,x)), & t \in (0, \tau), x \in \Omega, \\ \\
D_t u(t,x') + \sum_{j=1}^nÊb_j(t,x', U(t,x')) D_{x_j} u(t,x') = h(t,x, U(t,x)), & t \in (0, \tau),  x' \in \partial \Omega, \\ \\
u(0,x) = u_0(x), & x \in \Omega. 
\end{array}
\right.
\end{equation}
Then, applying Lemmata \ref{le5.1} and \ref{le5.2} and Theorem \ref{th4.5}, we obtain that (\ref{eq5.3}) has a unique solution $u = u(U)$ in $C^{1+\beta/2, 2+\beta}((0, \tau) \times \Omega)$, with $D_tu_{|(0, \tau) \times \partial \Omega}
 \in B((0, \tau); C^{1+\beta}(\partial \Omega))$. Moreover, there exists $C_1(R)$ in $\R^+$, independent of $\tau$ and $U$, such that 
 \begin{equation}\label{eq5.4}
\begin{array}{c}
\|u\|_{C^{1+\beta/2,2+\beta}((0, T) \times \Omega))}Ê+ \|D_tu_{|(0, T) \times \partial \Omega}\|_{B((0, T); C^{1+\beta}(\partial{\Omega}))}Ê+ \|u\|_{C^{\beta/2}((0, T); C^2(\Omega))} \\ \\+ \|u\|_{C^{\frac{1+\beta}{2}}((0, T); C^1(\Omega))}Ê
\leq C_1(R). 
\end{array}
\end{equation}
Hence, we have
$$
\left\{\begin{array}{cc}
D_t (u - U_0)(t,x) = \sum_{|\alpha| = 2}Êa_\alpha(t,x, u_0(x), \nabla_x u_0(x)) D_x^\alpha (u - U_0)(t,x) \\ \\
+ \sum_{|\alpha| = 2} [a_\alpha(t,x, U(t,x), \nabla_x U(t,x)) - a_\alpha(t,x, u_0(x), \nabla_x u_0(x))] D_x^\alpha u(t,x) \\ \\
 + f(t,x, U(t,x)) - f(t,x,u_0(x)),  & t \in (0, \tau), x \in \Omega, \\ \\
D_t (u - U_0)(t,x') +  \sum_{j=1}^nÊb_j(t,x', u_0(x')) D_{x_j} (u - U_0) (t,x')  \\ \\
+  \sum_{j=1}^nÊ[b_j(t,x', U(t,x')) - b_j(t,x', u_0(x'))] D_{x_j} u(t,x') \\ \\
= h(t,x', U(t,x')) - h(t,x, u_0(x')) , & t \in (0, \tau),  x' \in \partial \Omega, \\ \\
(u - U_0)(0,x) = 0, & x \in \Omega. 
\end{array}
\right.
$$
So, from Theorem \ref{th4.5} (III)-(IV), (\ref{eq5.4}) and Lemmata \ref{eq5.1}-\ref{eq5.2}, we obtain
$$
d(u,U_0) \leq C_2(R) \tau^{1/2}. 
$$
Choosing $\tau$ such that $C_2(R) \tau^{1/2} \leq R$, we obtain that $U \to u(U)$ maps $\mM(\tau,R)$ into itself. Next, we show that, if $\tau$ is sufficiently small, it is a contraction in $\mM(\tau,R)$. Let $U, V \in \mM(\tau,R)$. We indicate with $u$ and $v$ the corresponding solutions to (\ref{eq5.3}). Then we have
$$
\left\{\begin{array}{cc}
D_t (u - v)(t,x) = \sum_{|\alpha| = 2}Êa_\alpha(t,x, V(t,x), \nabla_x V(t,x)) D_x^\alpha (u - v)(t,x) \\ \\
+ \sum_{|\alpha| = 2} [a_\alpha(t,x, U(t,x), \nabla_x U(t,x)) - a_\alpha(t,x, V(t,x), \nabla_x V(t,x))] D_x^\alpha u(t,x) \\ \\
 + f(t,x, U(t,x)) - f(t,x,V(t,x)),  & t \in (0, \tau), x \in \Omega, \\ \\
D_t (u - v)(t,x') +  \sum_{j=1}^nÊb_j(t,x', V(t,x')) D_{x_j} (u - v) (t,x')  \\ \\
+  \sum_{j=1}^nÊ[b_j(t,x', U(t,x')) - b_j(t,x', V(t,x'))] D_{x_j} u(t,x') \\ \\
= h(t,x', U(t,x')) - h(t,x, V(t,x')) , & t \in (0, \tau),  x' \in \partial \Omega, \\ \\
(u - v)(0,x) = 0, & x \in \Omega. 
\end{array}
\right.
$$
Arguing as before, we obtain
$$
\begin{array}{c}
d(u,v) \\ \\
 \leq C_3(R) \tau^{1/2}(\sum_{|\alpha| = 2} \|a_\alpha(t,x, U(t,x), \nabla_x U(t,x)) - a_\alpha(t,x, V(t,x), \nabla_x V(t,x))] D_x^\alpha u \|_{C^{\beta/2,\beta}((0, \tau) \times \Omega)} \\ \\
+ \|f(t,x, U(t,x)) - f(t,x,V(t,x))\|_{C^{\beta/2,\beta}((0, \tau) \times \Omega)} + \\ \\
 + \sum_{j=1}^nÊ\|[b_j(t,x', U(t,x')) - b_j(t,x', V(t,x'))] D_{x_j} u(t,x') \|_{C^{\beta/2,1+\beta}((0, \tau) \times \partial \Omega)}Ê+ 
 \\ \\
+ \|h(t,x', U(t,x')) - h(t,x, V(t,x'))\|_{C^{\beta/2,1+\beta}((0, \tau) \times \partial \Omega)}).
\end{array}
$$
We have, for $|\alpha| = 2$
$$
\begin{array}{c}
\sum_{|\alpha| = 2} \|a_\alpha(t,x, U(t,x), \nabla_x U(t,x)) - a_\alpha(t,x, V(t,x), \nabla_x V(t,x))] D_x^\alpha u \|_{C^{\beta/2,\beta}((0, \tau) \times \Omega)}  \\ \\
\leq C_1 \sum_{|\alpha| = 2} \|a_\alpha(t,x, U(t,x), \nabla_x U(t,x)) - a_\alpha(t,x, V(t,x), \nabla_x V(t,x))\|_{C^{\beta/2,\beta}((0, \tau) \times \Omega)} \\ \\
\times  (\|u \|_{C^{\beta/2}((0, \tau);C^2)} + \|u \|_{C^{\beta/2,2+\beta}((0, \tau) \times \Omega)}) \\ \\
\leq C_4(R) d(U,V),
\end{array}
$$
$$
\|f(t,x, U(t,x)) - f(t,x,V(t,x))\|_{C^{\beta/2,\beta}((0, \tau) \times \Omega)}  \leq C_5(R) d(U,V), 
$$
$$
\begin{array}{c}
\sum_{j=1}^nÊ\|[b_j(t,x', U(t,x')) - b_j(t,x', V(t,x'))] D_{x_j} u(t,x') \|_{C^{\beta/2,1+\beta}((0, \tau) \times \partial \Omega)} \\ \\
\leq C_2 \sum_{j=1}^nÊ\|[b_j(t,x', U(t,x')) - b_j(t,x', V(t,x'))] \|_{C^{\beta/2,1+\beta}((0, \tau) \times \partial \Omega)} \|D_{x_j} u\|_{C^{\beta/2,1+\beta}((0, \tau) \times \partial \Omega)} \\ \\
\leq  C_3 \sum_{j=1}^nÊ\|[b_j(t,x', U(t,x')) - b_j(t,x', V(t,x'))] \|_{C^{\beta/2,1+\beta}((0, \tau) \times \partial \Omega)} \\ \\
\times ( \|u\|_{C^{\frac{1+\beta}{2}}((0, \tau); C^1(\Omega))} + \|u \|_{C^{\beta/2,2+\beta}((0, \tau) \times \Omega)}) \\ \\
\leq C_5(R) \|U-V\|_{C^{\beta/2,1+\beta}((0, \tau) \times \partial \Omega)} \leq C_5(R) d(U,V). 
\end{array}
$$
We conclude that
$$
d(u,v) \leq C_6(R) \tau^{1/2} d(U,V), 
$$
implying that $U \to u(U)$ is a contraction if $C_6(R) \tau^{1/2} < 1$. 

So the conclusion follows from the contraction mapping theorem.

\end{proof}

We deduce the following

\begin{theorem}\label{th3.5}
Consider system (\ref{eq5.1}), with the conditions (AG1)-(AG4). Let $u_0 \in C^{2+\beta}(\Omega)$ be such that 
\begin{equation}\label{eq3.2}
\begin{array}{c}
\sum_{|\alpha| = 2}Êa_\alpha(0,x', u_0(x'), \nabla_x u_0(x')) D_x^\alpha u_0(x') + f(0,x', u_0(x')) \\ \\
= -\sum_{j=1}^nÊb_j(0,x', u_0(x')) D_{x_j} u_0(x') + h(0,x', u_0(x')), \quad \forall x' \in \partial \Omega. 
\end{array}
\end{equation}
Then there exists $\tau \in (0, T]$ such that (\ref{eq5.1}) admits a unique solution $u$ in $C^{1+\beta/2,2+\beta}((0, \tau) \times \Omega)$, with $D_tu_{|(0, \tau) \times \partial \Omega} \in B((0, \tau); C^{1+\beta}(\partial \Omega))$.
\end{theorem}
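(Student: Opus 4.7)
The plan is to deduce Theorem \ref{th3.5} as an essentially immediate consequence of Proposition \ref{pr5.4}, noting that assumption (AG4) is precisely what legitimizes its implicit appeal to Theorem \ref{th4.5}: the strong ellipticity $\sum_{|\alpha|=2} a_\alpha(0,x,u_0(x),\nabla u_0(x)) \xi^\alpha \geq \nu(\cdot)|\xi|^2$ and the positivity $\sum_{j=1}^n b_j(0,x',u_0(x'))\nu_j(x') > 0$ both hold on the compact sets $\overline\Omega$ and $\partial\Omega$, hence with uniform positive lower bounds, so that the linearized coefficients entering the contraction scheme of Proposition \ref{pr5.4} do satisfy the hypotheses (AF1)-(AF3) of Theorem \ref{th4.5}. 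Fixing any $R>0$ and invoking Proposition \ref{pr5.4} then yields $\tau(R) \in (0,T]$ and a solution $u \in C^{1+\beta/2,2+\beta}((0,\tau)\times\Omega)$ with the required regularity of $D_t u_{|(0,\tau)\times\partial\Omega}$, establishing existence.

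The only additional task is to upgrade the \emph{conditional} uniqueness of Proposition \ref{pr5.4} (within $\mM(\tau,R)$) to unconditional uniqueness inside $C^{1+\beta/2,2+\beta}((0,\tau)\times\Omega)$. Suppose $u_1$ and $u_2$ are two such solutions with $u_i(0,\cdot) = u_0$. By Lemma \ref{le2.7B}(V) each $u_i$ lies in $C^{(1+\beta)/2}((0,\tau); C^1(\Omega)) \cap C^{\beta/2}((0,\tau); C^2(\Omega))$, and interpolating between $C^{(1+\beta)/2}((0,\tau); C^1(\Omega))$ and $B((0,\tau); C^{2+\beta}(\Omega))$ via $C^{1+\beta}(\Omega) \in J_{\beta/(1+\beta)}(C^1(\Omega), C^{2+\beta}(\Omega))$ yields $u_i \in C^{1/2}((0,\tau); C^{1+\beta}(\Omega))$, with the exponent $1/2 = \frac{1+\beta}{2}\cdot\bigl(1 - \frac{\beta}{1+\beta}\bigr)$. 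Since $u_i(0,\cdot) = U_0(0,\cdot) = u_0$, this together with the corresponding estimates for $U_0$ gives
$$
\|u_i - U_0\|_{C^{\beta/2}((0,\tau'); C^1(\Omega))} + \|u_i - U_0\|_{B((0,\tau'); C^{1+\beta}(\Omega))} \longrightarrow 0 \quad \mbox{as } \tau' \to 0^+,
$$
so that for $\tau'$ sufficiently small both $u_1, u_2$ belong to $\mM(\tau',R)$, and the uniqueness clause of Proposition \ref{pr5.4} forces $u_1 \equiv u_2$ on $[0,\tau']$.

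To finish, a standard continuation argument closes the gap: set $T^* := \sup\{s \in [0,\tau] : u_1 \equiv u_2 \mbox{ on } [0,s]\}$. By continuity $u_1(T^*,\cdot) = u_2(T^*,\cdot)$; if $T^* < \tau$, translating in time by $T^*$ reduces the problem to a fresh local-uniqueness question at initial datum $u_1(T^*,\cdot)$ (the coefficient families $a_\alpha(T^*+\cdot,\cdot,\cdot,\cdot)$, $b_j(T^*+\cdot,\cdot,\cdot)$, $f(T^*+\cdot,\cdot,\cdot,\cdot)$, $h(T^*+\cdot,\cdot,\cdot)$ satisfy the same hypotheses), and repeating the previous argument extends equality beyond $T^*$, contradicting maximality. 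Hence $T^* = \tau$, yielding the uniqueness statement. The chief point requiring care is the interpolation step giving the $C^{1/2}$-in-time bound in $C^{1+\beta}(\Omega)$; the rest is bookkeeping on top of the already-proved Proposition \ref{pr5.4}.
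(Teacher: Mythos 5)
Your proposal is correct and follows essentially the same route as the paper: existence via Proposition \ref{pr5.4}, and uniqueness by showing that any solution lies, for small time, in the ball $\mM(\tau',R)$ around the frozen-coefficient solution (the step the paper dismisses as ``the usual method'', which you make explicit through the interpolation $C^{1+\beta}(\Omega)\in J_{\beta/(1+\beta)}(C^1(\Omega),C^{2+\beta}(\Omega))$ yielding the $C^{1/2}$-in-time bound), followed by a time-translation/continuation argument at the first time of disagreement. The only detail the paper spells out that you leave implicit is the verification of the compatibility condition (\ref{eq3.2}) for the translated datum $u_1(T^*,\cdot)$, which is immediate from the two equations evaluated at $t=T^*$.
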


\begin{proof}
The existence follows from Proposition \ref{pr5.4}. 

Concerning the uniqueness, let $u$ and $v$ be different solutions in $(0, \tau) \times \Omega$. Let $\tau_1:= \inf\{t \in [0, \tau]: u(t,\cdot) \neq v(t,\cdot)\}$. Then $0 \leq \tau_1 < \tau$ and $u(t,\cdot) = v(t,\cdot)$ $\forall t \in [0, \tau_1]$. We set $u_1:= u(\tau_1,\cdot) = v(\tau_1,\cdot)$. We consider the system
\begin{equation}\label{eq5.5}
\left\{\begin{array}{ll}
D_t z(t,x) = \sum_{|\alpha| = 2}Êa_\alpha(\tau_1+t,x, z(t,x), \nabla_x z(t,x)) D_x^\alpha z(t,x) + f(\tau_1+t,x, z(t,x), \nabla_x z(t,x)), & t \geq 0, x \in \Omega, \\ \\
D_t z(t,x') + \sum_{j=1}^nÊb_j(\tau_1+t,x', z(t,x')) D_{x_j} z(t,x') = h(\tau_1+t,x', z(t,x')), & t \geq 0,  x' \in \partial \Omega, \\ \\
z(0,x) = u_1(x), & x \in \Omega. 
\end{array}
\right.
\end{equation}
Then $u_1 = u(\tau_1,\cdot) \in C^{2+\beta}(\Omega)$. Moreover, if $x' \in \partial \Omega$, 
$$
\begin{array}{c}
\sum_{|\alpha| = 2}Êa_\alpha(\tau_1,x', u_1(x'), \nabla_x u_1(x')) D_x^\alpha u_1(x') + f(\tau_1,x', u_1(x'), \nabla_x u_1(x')) \\ \\
= \sum_{|\alpha| = 2}Êa_\alpha(\tau_1,x', u(\tau_1,x'), \nabla_x u(\tau_1,x')) D_x^\alpha u(\tau_1,x') + f(\tau_1,x', u(\tau_1,x'), \nabla_x u(\tau_1,x')) \\  \\
= - \sum_{j=1}^nÊb_j(\tau_1,x',u(\tau_1,x') ) D_{x_j} u(\tau_1,x') + h(\tau_1,x', u(\tau_1,x')) \\ \\
= - \sum_{j=1}^nÊb_j(\tau_1,x',u_1(x')) D_{x_j} u_1(x') + h(\tau_1,x', u_1(x')). 
\end{array}$$
We indicate with $U_1$ the unique solution in $C^{1+\beta/2,2+\beta}((0, T-\tau_1) \times \Omega)$ with $D_tU_{1|(0, T - \tau_1) \times \partial \Omega)}Ê\in B((0, T-\tau_1); C^{1+\beta}(\partial \Omega))$ of 
$$
\left\{\begin{array}{c}
D_t z(t,x) = \sum_{|\alpha| = 2}Êa_\alpha(\tau_1+t,x, u_1(x), \nabla_x u_1(x)) D_x^\alpha z(t,x) + f(\tau_1+t,x, u_1(x), \nabla_x u_1(x)), \\ \\
 t \in (0, T-\tau_1), x \in \Omega, \\ \\
D_t z(t,x') + \sum_{j=1}^nÊb_j(\tau_1+t,x', u_1(x')) D_{x_j} z(t,x') = h(\tau_1+t,x', u_1(x')), \\ \\
 t \in (0, T - \tau_1),  x' \in \partial \Omega, \\ \\
z(0,x) = u_1(x), \quad x \in \Omega. 
\end{array}
\right.
$$
existing by Theorem \ref{th4.5}. Arguing as in the proof of Proposition \ref{pr5.4}, we deduce that, $\forall R \in \R^+$ there exists $\tau(R) \in \R^+$ such that, if $0 < \delta \leq \tau(R)$, (\ref{eq5.5}) has a unique solution $z$ in 
$C^{1+\beta/2,2+\beta}((0, \delta) \times \Omega)$, with $D_tz_{|(0, \delta) \times \partial \Omega} \in B((0, \delta); C^{1+\beta}(\partial \Omega))$ and satisfying
$$
\|z - U_1\|_{C^{\beta/2}((0, \delta); C^1(\Omega))}Ê+ \|z - U_1\|_{B((0, \delta); C^{1+\beta}(\Omega))} \leq R. 
$$
Evidently, $u(\tau_1+ \cdot, \cdot)$ and $v(\tau_1+ \cdot, \cdot)$ both satisfy (\ref{eq5.5}) in $(0, \tau-\tau_1) \times \Omega$. Moreover, with the usual method we can see that
$$
\begin{array}{c}
\lim_{\delta \to 0}Ê\{\|u(\tau_1+\cdot,\cdot)  - U_1\|_{C^{\beta/2}((0, \delta); C^1(\Omega))}Ê+ \|u(\tau_1+\cdot,\cdot)  - U_1\|_{B((0, \delta); C^{1+\beta}(\Omega))} \\ \\
+ \|v(\tau_1+\cdot,\cdot)  - U_1\|_{C^{\beta/2}((0, \delta); C^1(\Omega))}Ê+ \|u(\tau_1+\cdot,\cdot)  - U_1\|_{B((0, \delta); C^{1+\beta}(\Omega))}\} = 0
\end{array}
$$
We deduce that there exists some $\delta$ in $(0, \tau - \tau_1]$ such that $u$ and $v$ coincide in $[\tau_1, \tau_1+\delta]Ê\times \Omega$, in contradiction with the definition of $\tau_1$. 

\end{proof}

\end{document}